%%%%%%%%%%%%%%%%%%%% author.tex %%%%%%%%%%%%%%%%%%%%%%%%%%%%%%%%%%%
%
% sample root file for your "contribution" to a contributed volume
%
% Use this file as a template for your own input.
%
%%%%%%%%%%%%%%%% Springer %%%%%%%%%%%%%%%%%%%%%%%%%%%%%%%%%%

% RECOMMENDED %%%%%%%%%%%%%%%%%%%%%%%%%%%%%%%%%%%%%%%%%%%%%%%%%%%
\documentclass[graybox]{svmult}

% choose options for [] as required from the list
% in the Reference Guide
%%% Added by FO
\usepackage{amssymb}%,amsfonts,epsfig,graphicx}
\usepackage{eurosym}
\usepackage[T1]{fontenc}
\usepackage{times}
\usepackage{mathptmx}       % selects Times Roman as basic font
\usepackage{helvet}         % selects Helvetica as sans-serif font
\usepackage{courier}        % selects Courier as typewriter font
%\usepackage{type1cm}        % activate if the above 3 fonts are
                            % not available on your system
%
\usepackage{makeidx}         % allows index generation
\usepackage{graphicx}        % standard LaTeX graphics tool
                             % when including figure files
\usepackage{multicol}        % used for the two-column index
\usepackage[bottom]{footmisc}% places footnotes at page bottom

% see the list of further useful packages
% in the Reference Guide

\newtheorem{ex}{Example}
\newtheorem{prb}{Problem}
\newtheorem{prop}{Proposition}

\newtheorem{rem}{Remark}

\newtheorem{thm}{Theorem}

\newcommand\ind{1\hspace{-2.2mm}1}%-2.5mm
\newcommand{\R}{\mathbb{R}}

\newcommand\N{\mathbb N}

\newcommand{\Z}{\mathbb{Z}}

\newcommand\dju[4]{\bigcup_{#1}^{#2}\hspace{-#3mm\cdot}\hspace{#4mm}}

\newcommand\rsi{[\![}
\newcommand\lsi{]\!]}
 %sbl : = stochastic bracket left 
 %sbr : = stochastic bracket right
\newcommand\csbl{[\hspace{-1.2mm}[} %csbl : = centered stochastic bracket left
\newcommand\csbr{\,]\hspace{-1.2mm}]} %csbr : = centered stochastic bracket left

\newcommand{\bthe}{\begin{Theorem}}
\newcommand{\ethe}{\end{Theorem}}

\newcommand{\dotcup}{\ensuremath{\mathaccent\cdot\cup}}
\newcommand\comment[1]{}

% \hyphenation{Elektrizit?ts-termin-preis-prozess}
%\setbeamertemplate{theorems}[numbered]

%\setbeamercolor{frametitle}{fg=blue}
% \setbeamerfont{frametitle}{series=\bfseries}
% \setbeamertemplate{frametitle}{\center\begin{centering}\insertframetitle\par\end

%\usefonttheme[onlymath]{serif}

\makeindex             % used for the subject index
                       % please use the style svind.ist with
                       % your makeindex program

%%%%%%%%%%%%%%%%%%%%%%%%%%%%%%%%%%%%%%%%%%%%%%%%%%%%%%%%%%%%%%%%%%%%%%%%%%%%%%%%%%%%%%%%%

\begin{document}

\title*{On Jump Measures of Optional Processes with Regulated Trajectories}
% Use \titlerunning{Short Title} for an abbreviated version of
% your contribution title if the original one is too long
\author{Frank Oertel}
% Use \authorrunning{Short Title} for an abbreviated version of
% your contribution title if the original one is too long
\institute{Frank Oertel \at Deloitte \& Touche GmbH, FSI Assurance - Quantitative Services \& Valuation, Rosenheimer Platz 4, D-81669 Munich\\
\email{f.oertel@email.de}
%\and Name of Second Author \at Name, Address of Institute \email{name@email.address}
}
%
% Use the package "url.sty" to avoid
% problems with special characters
% used in your e-mail or web address
%
\maketitle
\abstract*{Starting from an iterative and hence numerically easily implementable representation of the thin set of jumps of a c\`{a}dl\`{a}g adapted stochastic process $X$ (including a few applications to the integration with respect to the jump measure of $X$), we develop similar representation techniques to describe the set of jumps of optional processes with regulated trajectories and introduce their induced jump measures with a view towards the framework of enlarged filtration in financial mathematics.}
\abstract{Starting from an iterative and hence numerically easily implementable representation of the thin set of jumps of a c\`{a}dl\`{a}g adapted stochastic process $X$ (including a few applications to the integration with respect to the jump measure of $X$), we develop similar representation techniques to describe the set of jumps of optional processes with regulated trajectories and introduce their induced jump measures with a view towards the framework of enlarged filtration in financial mathematics.}
\section{Preliminaries and Notation}
\label{sec:1}
In this section, we introduce the basic notation and terminology
which we will use throughout in this paper. 
%To perpetuate the lucidity
%of the main ideas, we only consider $\R$-valued functions and
%$\R$-valued trajectories of stochastic processes, although a
%transfer to the (finite) multi-dimensional case is easily possible.
Most of our notation and definitions including those ones
originating from the general theory of stochastic processes and
stochastic analysis are standard. We refer the reader to the
monographs \cite{CW1990}, \cite{HWY1992}, \cite{JS2003} 
and \cite{P2004}.

Since at most countable unions of pairwise disjoint sets play an important
role in this paper, we use a well-known symbolic abbreviation. For example, if
$A : = \bigcup_{n=1}^{\infty} A_n$, where $(A_n)_{n \in \N}$ is a
sequence of sets such that $A_i \cap A_j = \emptyset$ for all $i
\not= j$, we write shortly $A : = \dju{n=1}{\infty}{7.0}{6.0}A_n$. 

Throughout this paper, $(\Omega , \mathcal{F}, {\bf{F}}, {\mathbb{P}})$
denotes a fixed probability space, together with a fixed filtration
${\bf{F}}$. Even if it is not explicitly emphasized, the filtration
${\bf{F}} = (\mathcal{F}_t)_{t \geq 0}$ always is supposed to
satisfy the usual conditions\footnote{$\mathcal{F}_0$ contains all
$\mathbb{P}$-null sets and ${\bf{F}}$ is right-continuous.}.
% Moreover, it is assumed that $\mathcal{F}_{\infty-} = \mathcal{F}$.
% This allows us to identify uniformly integrable martingales with
% their limit at $\infty$.
A real-valued (stochastic) process $X : \Omega \times \R^+
\longrightarrow \R$ (which may be identified with the family of
random variables $(X_t)_{t \geq 0}$, where $X_t(\omega) : =
X(\omega, t)$)\footnote{$\R^{+} : = [0, \infty)$.} is called \textit{adapted} (with respect to
${\bf{F}}$) if $X_t$ is ${\mathcal{F}_t}$-measurable for all $t \in \R^+$. $X$
is called \textit{right-continuous} (respectively
\textit{left-continuous}) if for all $\omega \in \Omega$ the
trajectory $X_{\bullet}(\omega) : \R^+ \longrightarrow \R, t \mapsto
X_t(\omega)$ is a right-continuous (respectively left-continuous)
real-valued function. If all trajectories of $X$ do have left-hand
limits (respectively right-hand limits) everywhere on $\R^+$, $X^{-}
= (X_{t -})_{t \geq 0}$ (respectively $X^{+} = (X_{t +})_{t \geq
0})$ denotes the \textit{left-hand} (respectively
\textit{right-hand}) \textit{limit process}, where $X_{0 -} : =
X_{0+}$ by convention. If all trajectories of $X$ do have left-hand
limits and right-hand limits everywhere on $\R^+$, the \textit{jump
process} $\Delta X = (\Delta X_t)_{t \geq 0}$ is well-defined on
$\Omega \times \R^+$. It is given by $\Delta X : = X^{+} - X^{-}$.
 
A right-continuous process whose trajectories do have left limits 
everywhere on $\R^+$, is known as a
\textit{c\`{a}dl\`{a}g} process. If $X$ is $\mathcal{F} \otimes
\mathcal{B}({\mathbb{R}}^+)$-measurable, $X$ is said to be
\textit{measurable}. $X$ is said to be \textit{progressively
measurable} (or simply \textit{progressive}) if for each $t \geq 0$,
its restriction $X\vert_{\Omega \times [0, t]}$ is $\mathcal{F}_t
\otimes \mathcal{B}([0, t])$-measurable. Obviously, every
progressive process is measurable and (thanks to Fubini) adapted.

A random variable $T : \Omega \longrightarrow [0, \infty]$ is said
to be a \textit{stopping time} or \textit{optional time} (with
respect to ${\bf{F}}$) if for each $t \geq 0$, $\{T \leq t\} \in
{\mathcal{F}}_t$. Let $\mathcal{T}$ denote the set of all stopping
times, and let $S, T \in \mathcal{T}$ such that $S \leq T$. Then
$\rsi S, T \rsi : = \{ (\omega , t) \in \Omega \times \R^+ :
S(\omega) \leq t < T(\omega)\}$ is an example for a
\textit{stochastic interval}. Similarly, one defines the stochastic
intervals $\lsi S, T \lsi$, $\lsi S, T \rsi$ and $\rsi S, T \lsi$.
Note again that $\rsi T \lsi : = \rsi T, T \lsi =
\textup{Gr}(T)\vert_{\Omega \times \R^+}$ is simply the graph of the
stopping time $T : \Omega \longrightarrow [0, \infty]$ restricted to
$\Omega \times \R^+$. $\mathcal{O} = \sigma\big\{[\![T,\infty [\![
\hspace{1mm}: T \in \mathcal{T}\big\}$ denotes the \textit{optional
$\sigma$-field} which is generated by all c\`{a}dl\`{a}g adapted
processes. The \textit{predictable $\sigma$-field} $\mathcal{P}$ is
generated by all left-continuous adapted processes. An
$\mathcal{O}$- (respectively $\mathcal{P}$-) measurable process is
called \textit{optional} or \textit{well-measurable} (respectively
\textit{predictable}).
% If $[\![\tau,\infty [\![ $ is a predictable
% set (i.e., $[\![\tau,\infty [\![ \in \mathcal{P}$), $\tau$
% is called a \textit{predictable stopping time}.
All optional or predictable processes are adapted. 

For the convenience of the reader, we recall and summarise the precise
relation between those different types of processes in the following
\begin{theorem}\label{thm:POPA}
Let $(\Omega , \mathcal{F}, {\bf{F}}, {\mathbb{P}})$ be a filtered
probability space such that ${\bf{F}}$ satisfies the usual
conditions. Let $X$ be a (real-valued) stochastic process on $\Omega \times
{\mathbb{R}}^+$. Consider the following statements:
\begin{description}[(iii)]
\item[(i)] $X$ is predictable;
\item[(ii)] $X$ is optional;
\item[(iii)] $X$ is progressive;
\item[(iv)] $X$ is adapted.
\end{description}
Then the following implications hold:
\[
\textstyle{(i)} \Rightarrow \textstyle{(ii)} \Rightarrow
\textstyle{(iii)} \Rightarrow \textstyle{(iv)}.
\]
If $X$ is right-continuous, then the following implications hold:
\[
\textstyle{(i)} \Rightarrow \textstyle{(ii)} \iff \textstyle{(iii)}
\iff \textstyle{(iv)}.
\]
If $X$ is left-continuous, then all statements are equivalent.
\end{theorem}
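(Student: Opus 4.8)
\noindent The plan is to read off both inclusions directly from the generating definitions: since $\mathcal P$ (resp.\ $\mathcal O$) is the \emph{smallest} $\sigma$-field rendering every left-continuous (resp.\ c\`adl\`ag) adapted process measurable, in order to prove $\mathcal P\subseteq\mathcal O$ or $\mathcal O\subseteq\mathrm{Prog}$ (the progressive $\sigma$-field) it is enough to verify that each generator of the smaller class is measurable for the larger $\sigma$-field. No monotone-class machinery is then required.

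\noindent\textbf{The general chain.} For $(i)\Rightarrow(ii)$ I would take a left-continuous adapted $Y$ and exhibit it as the pointwise limit of the dyadic step processes $Y^n_t:=Y_0\mathbf{1}_{\{0\}}(t)+\sum_{k\ge 0}Y_{k/2^n}\mathbf{1}_{(k/2^n,(k+1)/2^n]}(t)$: each $Y^n$ is c\`adl\`ag and adapted, because on $(k/2^n,(k+1)/2^n]$ its value $Y_{k/2^n}$ is $\mathcal F_{k/2^n}$-measurable, hence optional by the definition of $\mathcal O$; and $Y^n_t\to Y_{t-}=Y_t$ by left-continuity, so $Y$ is optional and $\mathcal P\subseteq\mathcal O$. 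For $(ii)\Rightarrow(iii)$ I would fix $t$ and approximate a c\`adl\`ag adapted $X$ on $[0,t]$ by the \emph{forward} step processes $X^n_s:=\sum_k X_{(k+1)t/2^n}\mathbf{1}_{[kt/2^n,(k+1)t/2^n)}(s)+X_t\mathbf{1}_{\{t\}}(s)$; every sampled value $X_{(k+1)t/2^n}$ is $\mathcal F_t$-measurable since $(k+1)t/2^n\le t$, so $X^n|_{\Omega\times[0,t]}$ is $\mathcal F_t\otimes\mathcal B([0,t])$-measurable, while right-continuity gives $X^n_s\to X_s$; thus $X$ is progressive and $\mathcal O\subseteq\mathrm{Prog}$. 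Finally $(iii)\Rightarrow(iv)$ is the remark already recorded above: composing a progressive $X$ with the measurable section $\omega\mapsto(\omega,t)$ shows that $X_t$ is $\mathcal F_t$-measurable.

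\noindent\textbf{The right-continuous case.} Given the chain $(ii)\Rightarrow(iii)\Rightarrow(iv)$ it remains to close the loop. The arrow $(iv)\Rightarrow(iii)$ falls out of the very same forward discretization, which never invoked left limits: only $\mathcal F_t$-measurability of the samples and right-continuity are used. The delicate point, and the main obstacle, is $(iv)\Rightarrow(ii)$: that a right-continuous adapted $X$ is optional. Here the forward (right-endpoint) discretization is inadmissible, since sampling at $(k+1)/2^n$ on $(k/2^n,(k+1)/2^n)$ destroys adaptedness, whereas the backward (left-endpoint) discretization stays adapted but converges to the left limits $X_{t-}$ rather than to $X_t$. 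To repair this I would discretize along stopping times instead of along the deterministic grid: setting $S^n_0:=0$ and $S^n_{k+1}:=\inf\{t>S^n_k:\ |X_t-X_{S^n_k}|\ge 1/n\}$, the usual conditions guarantee that each $S^n_k$ is a stopping time and right-continuity bounds the oscillation of $X$ by $1/n$ on $\rsi S^n_k,S^n_{k+1}\rsi$, so that $\sum_k X_{S^n_k}\mathbf{1}_{\rsi S^n_k,S^n_{k+1}\rsi}$ is a c\`adl\`ag adapted --- hence optional --- process converging to $X$. I expect the only genuine bookkeeping to be the verification that $S^n_k\uparrow\infty$; when $X$ is moreover c\`adl\`ag this is automatic, and in that case $X$ is optional \emph{directly} from the definition of $\mathcal O$, so the approximation is superfluous.

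\noindent\textbf{The left-continuous case.} Now all four statements coincide: the general chain supplies $(i)\Rightarrow(ii)\Rightarrow(iii)\Rightarrow(iv)$, and the closing arrow $(iv)\Rightarrow(i)$ comes for free, because a left-continuous adapted process is predictable by the very definition of $\mathcal P$. This completes the cycle and yields the equivalence of $(i)$--$(iv)$.
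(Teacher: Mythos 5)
Your general chain $(i)\Rightarrow(ii)\Rightarrow(iii)\Rightarrow(iv)$ is essentially sound (and more explicit than the paper, which settles the whole theorem by citation: \cite[Chapter 3]{CW1990} for the chain, the definition of $\mathcal P$ for the left-continuous case, and \cite[Theorem 4.32]{HWY1992} for the right-continuous case). One slip in $(i)\Rightarrow(ii)$: your approximants $Y^n$, built on the intervals $(k/2^n,(k+1)/2^n]$, are \emph{left}-continuous step processes (c\`agl\`ad), not c\`adl\`ag as you claim --- at $t=(k+1)/2^n$ the value is $Y_{k/2^n}$, immediately to the right it is $Y_{(k+1)/2^n}$. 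As written, each $Y^n$ is itself a generator of $\mathcal P$, so the argument is circular. The repair is easy: flip the intervals to $[k/2^n,(k+1)/2^n)$; adaptedness survives, $Y^n$ becomes genuinely c\`adl\`ag, and left-continuity of $Y$ still gives $Y^n_t=Y_{k_n/2^n}\to Y_{t-}=Y_t$ because $k_n/2^n\le t$ and $k_n/2^n\to t$.

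The genuine gap is in $(iv)\Rightarrow(ii)$ for right-continuous $X$, exactly the point you call delicate, and the failure of your scheme is not bookkeeping. The times $S^n_k$ can accumulate at a finite time precisely when $X$ lacks left limits --- which is the only case in which the approximation is needed at all (if $X$ is c\`adl\`ag, it is optional by the definition of $\mathcal O$). Concretely, take the deterministic process $X_t=\sin\big(1/(1-t)\big)$ for $t<1$ and $X_t=5$ for $t\ge 1$; it is right-continuous and adapted. Whenever $S^n_{k+1}<\infty$, right-continuity at $S^n_{k+1}$ forces $\vert X_{S^n_{k+1}}-X_{S^n_k}\vert\ge 1/n$, so the sequence $(X_{S^n_k})_k$ is never Cauchy; hence the $S^n_k$ cannot converge to any point at which $X$ has a left limit, and in this example $S^n_k\uparrow 1<\infty$. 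Consequently (a) the process $\sum_k X_{S^n_k}\ind_{\rsi S^n_k,S^n_{k+1}\rsi}$ vanishes on $[1,\infty)$ and does not converge to $X$ there, and (b) it has no left limit at $1$, so it is not c\`adl\`ag and you cannot conclude that it is optional. To continue the construction past such accumulation points one needs transfinite induction over the countable ordinals (or, alternatively, a section-theorem argument); that is the actual content of \cite[Theorem 4.32]{HWY1992}, which the paper invokes at precisely this spot. Note also that even the claim that each $S^n_{k+1}$ is a stopping time already requires the d\'ebut theorem (Theorem \ref{thm:optional debut is stopping time}), i.e.\ machinery of the same depth. Your treatment of $(ii)\Rightarrow(iii)$, $(iii)\Rightarrow(iv)$, $(iv)\Rightarrow(iii)$ and of the left-continuous case is correct.
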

\begin{proof}
\smartqed
The general chain of implications $\textstyle{(i)} \Rightarrow
\textstyle{(ii)} \Rightarrow \textstyle{(iii)} \Rightarrow
\textstyle{(iv)}$ is well-known (for a detailed discussion cf.
e.\,g. \cite[Chapter 3]{CW1990}). If $X$ is left-continuous and
adapted, then $X$ is predictable. Hence, in this case, all four
statements are equivalent. If $X$ is right-continuous and adapted,
then $X$ is optional (cf. e.\,g. \cite[Theorem 4.32]{HWY1992}). In particular, $X$ is
progressive. \qed
\end{proof}
Recall that a function $f : \R^{+} \longrightarrow \R$ is said to be \textit{regulated on $\R^{+}$} if $f$ has right- and left-limits everywhere on $(0, \infty)$ and $f(0+)$ exists (cf. e.\,g. \cite[Ch. 7.6]{D1960}). 
%and $\{t \in \R^{+} : f(t-) \textup{ and } f(t+) \textup{ exist}\} = (0, \infty)$ .

Let us also commemorate the following 
\begin{lemma}\label{lemma:optional and regulated paths}
%Let $(\Omega , \mathcal{F}, {\bf{F}}, {\mathbb{P}})$ be a filtered
%probability space such that ${\bf{F}}$ satisfies the usual
%conditions. 
Let $X : \Omega \times \R^+ \longrightarrow \R$ be a stochastic process such that its trajectories are regulated. Then all trajectories of the left limit process $X^{-}$ $($respectively of the right limit process $X^{+}$$)$ are left-continuous $($respectively right-continuous$)$. If in addition $X$ is optional, then $X^{-}$ is predictable and $X^+$ is adapted.
\end{lemma}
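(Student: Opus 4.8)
The plan is to establish the two path-regularity claims first, as deterministic facts about regulated functions applied trajectory-wise, and then to derive the measurability claims from them together with Theorem~\ref{thm:POPA} and the usual conditions. For the continuity statements, fix $\omega$ and set $f := X_{\bullet}(\omega)$, which is regulated by hypothesis; it suffices to show the analytic fact that the left-limit function $f^{-}$ is left-continuous and the right-limit function $f^{+}$ is right-continuous. For left-continuity of $f^{-}$ at $t_0 > 0$, fix $\epsilon > 0$; existence of $f(t_0-)$ yields $\delta > 0$ with $|f(s) - f(t_0-)| < \epsilon$ for all $s \in (t_0 - \delta, t_0)$. For any $u \in (t_0 - \delta, t_0)$, the value $f^{-}(u) = f(u-)$ is a limit of such $f(s)$ as $s \uparrow u$, so $|f^{-}(u) - f(t_0-)| \le \epsilon$; letting $u \uparrow t_0$ gives left-continuity at $t_0$. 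The argument for $f^{+}$ is symmetric, using right-neighbourhoods and the existence of $f(t_0+)$ for $t_0 \ge 0$; at $t_0 = 0$ left-continuity of $f^{-}$ is vacuous, in accordance with the convention $X_{0-} := X_{0+}$.

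For the measurability assertions, note that $X$ optional implies $X$ adapted, so $X_s$ is $\mathcal{F}_s$-measurable for each $s$. Since the paths are regulated, one-sided limits exist everywhere and may be computed along fixed sequences. For $t > 0$ we have $X_{t-} = \lim_{n} X_{t - 1/n}$ pointwise on $\Omega$ (with $n > 1/t$), and each $X_{t-1/n}$ is $\mathcal{F}_{t-1/n} \subseteq \mathcal{F}_t$-measurable; hence $X^{-}$ is adapted on $(0,\infty)$, the value at $t = 0$ being $X_{0-} = X_{0+}$, treated as in the next computation. Being both left-continuous and adapted, $X^{-}$ is predictable by Theorem~\ref{thm:POPA}. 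For the right-limit process, $X_{t+} = \lim_{n} X_{t + 1/n}$ pointwise; for fixed $m$ the tail $(X_{t+1/n})_{n \ge m}$ is $\mathcal{F}_{t+1/m}$-measurable, so the limit is measurable with respect to $\bigcap_{m} \mathcal{F}_{t+1/m} = \mathcal{F}_{t+} = \mathcal{F}_t$, the final equality being the right-continuity of $\mathbf{F}$ from the usual conditions. Thus $X^{+}$ is adapted.

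The main obstacle is the analytic lemma that taking left (respectively right) limits of a regulated function preserves left- (respectively right-) continuity; the $\epsilon$-$\delta$ estimate above is its crux and is what guarantees that the limit processes carry the path regularity required to invoke Theorem~\ref{thm:POPA}. On the probabilistic side the sole non-automatic input is the passage from $\mathcal{F}_{t+}$ to $\mathcal{F}_t$ for $X^{+}$, which is exactly where right-continuity of the filtration is indispensable; for $X^{-}$ no such hypothesis is needed, and it is the additional left-continuity that promotes adaptedness to predictability.
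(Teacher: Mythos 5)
Your proof is correct. Note that the paper states this lemma without any proof (it is merely recalled as a known fact), so there is no argument of the paper to compare against; your two-step route --- the deterministic $\epsilon$-$\delta$ fact that the left/right limit functions of a regulated function are left-/right-continuous, followed by sequential-limit adaptedness and an appeal to Theorem~\ref{thm:POPA} --- is precisely the standard argument the paper implicitly relies on. You also correctly isolate the two points where the hypotheses genuinely enter: right-continuity of $\mathbf{F}$ (usual conditions) to pass from $\mathcal{F}_{t+}$ to $\mathcal{F}_t$ for adaptedness of $X^{+}$, and left-continuity plus adaptedness to upgrade $X^{-}$ to predictability.
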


Given an optional process $X$ with regulated trajectories, we put  
\[
\{\Delta X \not= 0 \} : = \{(\omega, t) \in \Omega \times {{\R}^+} : \Delta X_t(\omega) \not= 0\}\,.
\]
Recall the important fact that for any $\varepsilon > 0$ and any regulated function $f : \R^+ \longrightarrow \R$ the set $J_f(\varepsilon) := \{t >0 : \vert \Delta f(t) \vert > \varepsilon\}$ is at most countable, implying that
\[
J_f : = \{t > 0: \Delta f(t) \not= 0\} = \{t > 0 : \vert
\Delta f(t) \vert > 0\} = \bigcup\limits_{n \in \N}J_f(\frac{1}{n})
\]
is at most countable as well (cf. \cite[p. 286-288]{Ho1921} and \cite[Theorem 1.3]{K2004}). 
%and \cite{Theorem 2.6}[B1998]).
\section{Construction of Thin Sets of Jumps of C\`{a}dl\`{a}g Adapted Processes}
\label{sec:2}
In the general framework of semimartingales with jumps (such as e.\,g. L\'{e}vy processes) there are several ways to describe a stochastic integral with respect to a (random) jump measure $j_X$ of a c\`{a}dl\`{a}g adapted stochastic process $X = (X_t)_{t\geq 0}$. One approach is to implement the important subclass of ``thin'' subsets of ${\Omega \times \mathbb{R}}^{+}$ (cf. \cite[Def. 1.30]{JS2003}) in order to analyse the set $\{\Delta X \not= 0 \}$: 
%off all jumps of $X$, given by
%\[
%\{\Delta X \not= 0 \} : = \{(t, \omega) \in {\mathbb{R}}^{+} \times \Omega : \Delta X_t(\omega) \not= 0\}\,.
%\] 
%We build our Short Note on the following crucial result from \cite{D1972} (cf. also \cite{JS2003}):
%, stating in particular that the set $\{\Delta X \not= 0 \}$ is a \textit{thin} set. 
%\input{referenc_FO_2}
%\end{document} 
%\\[0.5em]
%{}
\begin{theorem}[Dellacherie, 1972]\label{thm:Dellacherie}
%Let $(\Omega , {\mathcal{F}}, {\bf{F}}, {\mathbb{P}})$ be a filtered probability space and 
Let $X = (X_t)_{t \geq 0}$ be an arbitrary ${\bf{F}}$-adapted c\`{a}dl\`{a}g stochastic process on $(\Omega , {\mathcal{F}}, {\bf{F}}, {\mathbb{P}})$. Then there exist a sequence $(T_n)_{n \in {\mathbb{N}}}$ of ${\bf{F}}$-stopping times such that $\rsi T_n \lsi \cap \rsi T_k \lsi  = \emptyset$ for all $n \not= k$ and
\[
%\{\Delta X \not= 0 \} : = 
\{\Delta X \not= 0 \} = \dju{n=1}{\infty}{3.3}{2.0}\rsi T_n \lsi\,. %\tag{$\ast$}\,.
\]
In particular, $\Delta X_{T_n(\omega)}(\omega) \not= 0$ for all $\omega \in \Omega$ and $n \in {\mathbb{N}}$.   
\end{theorem}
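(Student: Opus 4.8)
The plan is to build the stopping times by successively \emph{hitting} the large jumps of $X$, one size-threshold at a time, and then to disjointify the resulting countable family.

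First I would fix a threshold $1/m$, $m \in \N$, and record the structural fact about the trajectories already noted above: since each path of $X$ is c\`{a}dl\`{a}g (hence regulated), the set $\{t > 0 : |\Delta X_t(\omega)| > 1/m\}$ has no accumulation point in $\R^+$, so it is finite on every compact interval and its points can be listed in strictly increasing order, tending to $\infty$. I would also observe that $\Delta X = X - X^{-}$ is optional: $X$ is c\`{a}dl\`{a}g and adapted, hence optional by Theorem~\ref{thm:POPA}, while $X^{-}$ is left-continuous and adapted, hence predictable and a fortiori optional; thus $\{|\Delta X| > 1/m\}$ is an optional (indeed thin) random set.

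Next I would define the hitting times iteratively by $R_0^m := 0$ and
\[
R_{k+1}^m := \inf\{ t > R_k^m : |\Delta X_t| > 1/m \}, \qquad \inf \emptyset := \infty .
\]
Each $R_{k+1}^m$ is the d\'{e}but of the optional set $\{|\Delta X| > 1/m\} \cap \lsi R_k^m, \infty \lsi$, so under the usual conditions it is an $\mathbf{F}$-stopping time by the D\'{e}but Theorem. Because the large jumps are isolated, on $\{R_{k+1}^m < \infty\}$ the infimum is attained and $|\Delta X_{R_{k+1}^m}| > 1/m$; moreover $R_k^m \uparrow \infty$ as $k \to \infty$, so for fixed $m$ the graphs $\rsi R_k^m \lsi$ are pairwise disjoint and exhaust $\{|\Delta X| > 1/m\}$. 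Letting $m$ range over $\N$ and relabelling the doubly-indexed family $\{R_k^m\}$ as a single sequence $(S_j)_{j \in \N}$ yields $\{\Delta X \neq 0\} = \bigcup_{j} \rsi S_j \lsi$, since $\{\Delta X \neq 0\} = \bigcup_m \{|\Delta X| > 1/m\}$.

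The graphs of the $S_j$ may still overlap (a jump larger than $1/m$ is also larger than $1/(m+1)$), so the final step is disjointification. For this I would set $A_j := \{S_j < \infty\} \setminus \bigcup_{i<j}\{S_j = S_i\}$ and define $T_j$ to be $S_j$ on $A_j$ and $+\infty$ elsewhere. Since $\{S_i = S_j\} \in \mathcal{F}_{S_i \wedge S_j} \subseteq \mathcal{F}_{S_j}$ and $\{S_j < \infty\} \in \mathcal{F}_{S_j}$, we have $A_j \in \mathcal{F}_{S_j}$, so each $T_j$ is a stopping time; by construction the graphs $\rsi T_j \lsi$ are pairwise disjoint and $\bigcup_j \rsi T_j \lsi = \bigcup_j \rsi S_j \lsi = \{\Delta X \neq 0\}$. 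On $\{T_j < \infty\}$, where alone the claim is meaningful, we indeed have $\Delta X_{T_j} \neq 0$. The main obstacle is the measurability in the middle step: verifying rigorously, via the D\'{e}but Theorem under the usual conditions, that the iterated hitting times $R_k^m$ are genuine stopping times and that each infimum is actually attained at a jump; everything else is bookkeeping.
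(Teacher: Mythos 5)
Your proof is correct. One preliminary observation: the paper itself does not prove this theorem — it is quoted from Dellacherie's 1972 monograph — so the closest in-paper material is Proposition~\ref{thm:iterative exhausting representation} and its proof. Your construction is essentially that proposition specialized to the sets $A_m := \{x \in \R : \vert x \vert > 1/m\} \in \mathcal{B}^\ast$: your $R_k^m$ are the paper's $T_k^{A_m}$, and your analytic input (jumps of size $> 1/m$ cannot accumulate, hence each infimum is attained, the hitting times strictly increase to $+\infty$, and $\{\vert \Delta X \vert > 1/m\}$ is exhausted by the graphs) is exactly what the paper's proof extracts from right-continuity via its contradiction argument. You genuinely differ in the two steps the paper outsources. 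First, for the stopping-time property you invoke the d\'{e}but theorem (Theorem~\ref{thm:optional debut is stopping time}), which needs the usual conditions, whereas the paper cites Protter's elementary iterative argument and reserves the d\'{e}but machinery for the later generalization to optional processes with regulated trajectories (Theorem~\ref{thm:jumps of optional processes and stopping times}), where right-continuity is unavailable. Second, and more substantively, you carry out the disjointification by hand — relabelling the double sequence and killing $S_j$ on $\bigcup_{i<j}\{S_j = S_i\}$, using $\{S_i = S_j\} \in \mathcal{F}_{S_i \wedge S_j}$ and the fact that the restriction of a stopping time to a set in $\mathcal{F}_{S_j}$ is again a stopping time — which is precisely the step the paper's proof of Proposition~\ref{thm:iterative exhausting representation} delegates to \cite[Theorem 3.19]{HWY1992}. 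So your argument is self-contained and constructive where the paper's is citation-based, at the (harmless, since the paper assumes it throughout) cost of the usual conditions. Three cosmetic points: the relabelling should of course omit the seeds $R_0^m = 0$, whose graphs lie in $\{\Delta X = 0\}$; the parenthetical claim that $\{\vert \Delta X \vert > 1/m\}$ is ``indeed thin'' is the conclusion being proved and should be dropped (you do not use it circularly); and your reading of the final assertion — $\Delta X_{T_n} \neq 0$ only on $\{T_n < \infty\}$ — is the right one, since any such construction inevitably produces stopping times that are infinite on large parts of $\Omega$.
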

A naturally appearing, iterative and hence implementable exhausting representation is given in the following important special case (cf. e.\,g. \cite[p. 25]{P2004} or the proof of \cite[Lemma 2.3.4.]{A2009}):
\begin{prop}\label{thm:iterative exhausting representation}
% Let $(\Omega , {\mathcal{F}}, {\bf{F}}, {\mathbb{P}})$ be a filtered probability space, satisfying the usual conditions. 
Let $X = (X_t)_{t \geq 0}$ be an arbitrary ${\bf{F}}$-adapted c\`{a}dl\`{a}g stochastic process on $(\Omega , {\mathcal{F}}, {\bf{F}}, {\mathbb{P}})$ and $A \in {\mathcal{B}}({\mathbb{R}})$ such that $0 \notin \overline{A}$. Put
\[
T_1^A(\omega) : = \inf\{t > 0 : \Delta X_t(\omega) \in A\}
\]
and
\[
T_n^A(\omega) : = \inf\{t > T_{n-1}^A(\omega) : \Delta X_t(\omega) \in A\} \hspace{5mm} (n \geq 2).
\]
Up to an evanescent set $(T_n^A)_{n \in {\mathbb{N}}}$ defines a sequence of strictly increasing ${\bf{F}}$-stopping times, satisfying
\[
\{\Delta X \in A \} = \dju{n=1}{\infty}{3.3}{2.4}\rsi S_n^A \lsi \,, 
%\,\, \tag{$\ast\ast$}\,,\vspace{-0.5cm}
\]
where
\[
S_n^A : = T_n^A \,\ind_{A}\big(\Delta X_{T_n^A}\big) + (+\infty) \,\ind_{A^c}\big(\Delta X_{T_n^A}\big) \,. 
%\tag{$n \in \mathbb{N}$}\,.
\]
\end{prop}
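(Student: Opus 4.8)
The plan is to isolate a purely pathwise (combinatorial) layer from the measurability layer. First I would use the hypothesis $0 \notin \overline{A}$ to fix $\delta > 0$ with $A \subseteq \{x \in \R : |x| \geq \delta\}$. For each fixed $\omega$ the set $\{t > 0 : \Delta X_t(\omega) \in A\}$ is then contained in $J_{X_\bullet(\omega)}(\delta/2)$, and by the regularity recalled above it is not only at most countable but in fact \emph{locally finite}: an accumulation point of jump times of size $\geq \delta$ would contradict the existence of a left limit (accumulation from the left) or right-continuity (accumulation from the right) at that point. Hence $\{t > 0 : \Delta X_t(\omega) \in A\}$ can be listed as a strictly increasing sequence $t_1(\omega) < t_2(\omega) < \cdots$ (finite, infinite, or empty), each infimum defining $T_n^A(\omega)$ is actually \emph{attained} and equals $t_n(\omega)$, and $\inf\emptyset = +\infty$ once the list is exhausted. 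This already delivers, trajectory by trajectory, strict monotonicity of $(T_n^A)_n$ on $\{T_n^A < \infty\}$ together with $\Delta X_{T_n^A(\omega)}(\omega) \in A$ whenever $T_n^A(\omega) < \infty$.

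Next I would secure the stopping-time property. By Lemma~\ref{lemma:optional and regulated paths} and Theorem~\ref{thm:POPA}, for c\`adl\`ag $X$ the left-limit process $X^{-}$ is predictable, hence optional, so $\Delta X = X - X^{-}$ is optional and the set $\{\Delta X \in A\}$ lies in $\mathcal{O}$ for every Borel $A$. Now $T_1^A$ is the d\'ebut of the optional set $\{\Delta X \in A\} \cap \lsi 0, \infty \rsi$ and, inductively, $T_n^A$ is the d\'ebut of $\{\Delta X \in A\} \cap \lsi T_{n-1}^A, \infty \rsi$, which is again optional once $T_{n-1}^A$ is a stopping time. The d\'ebut theorem (valid because ${\bf{F}}$ satisfies the usual conditions) then shows by induction that every $T_n^A$ is an ${\bf{F}}$-stopping time.

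It remains to introduce the correction $S_n^A$ and read off the representation. Since $T_n^A$ is a stopping time, $\{\Delta X_{T_n^A} \in A\} \in \mathcal{F}_{T_n^A}$, so the random time $S_n^A$ — equal to $T_n^A$ on this event and to $+\infty$ on its complement (the convention $\Delta X_{+\infty} := 0 \notin A$ absorbing $\{T_n^A = +\infty\}$) — is itself a stopping time; by the pathwise attainment from the first step one has $S_n^A = T_n^A$ off an evanescent set. The inclusion $\rsi S_n^A \lsi \subseteq \{\Delta X \in A\}$ is then immediate, and conversely any $(\omega, t)$ with $\Delta X_t(\omega) \in A$ satisfies $t = t_k(\omega) = T_k^A(\omega) = S_k^A(\omega)$ for exactly one $k$, giving $\{\Delta X \in A\} \subseteq \bigcup_n \rsi S_n^A \lsi$; the same enumeration yields pairwise disjointness of the graphs, since $S_n^A$ is strictly increasing where finite and $\rsi +\infty \lsi = \emptyset$. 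Collecting these gives $\{\Delta X \in A\} = \dju{n=1}{\infty}{3.3}{2.4}\rsi S_n^A \lsi$.

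I expect the genuine obstacle to be the measurability step rather than the bookkeeping: to be sure that the iterated infima are \emph{stopping times}, not merely well-defined random times. The economical route is the d\'ebut theorem, which forces one to verify carefully that $\{\Delta X \in A\}$ is optional and that the usual conditions are in force; one also has to keep track of the $\mathbb{P}$-null set on which trajectories might fail to be c\`adl\`ag, where attainment or strict monotonicity could break down. It is exactly this set that the qualifier ``up to an evanescent set'' and the built-in truncation of $S_n^A$ are designed to neutralise.
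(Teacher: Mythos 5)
Your proof is correct, and it takes a genuinely more self-contained route than the paper's. The paper outsources the two key measurability facts: it cites Protter (\cite[Chapter 4, p.~25ff]{P2004}) for the claim that each $T_n^A$ is a stopping time and that $\Omega_0 \times \R^+$ is evanescent, and it cites \cite[Theorem 3.19]{HWY1992} to pass from the $T_n^A$ to the $S_n^A$ and finish; the pathwise structure is then handled by two contradiction arguments (one for strict monotonicity, one for the inclusion $\{\Delta X \in A\} \subseteq \bigcup_n \rsi T_n^A \lsi$). You instead prove the stopping-time property yourself via the d\'ebut theorem applied to the optional sets $\{\Delta X \in A\} \cap \lsi T_{n-1}^A, \infty \rsi$ --- which is precisely the machinery the paper itself only deploys later, in the proof of Theorem~\ref{thm:jumps of optional processes and stopping times} for optional processes --- and you replace the contradiction arguments by a direct local-finiteness enumeration of the $A$-jump times. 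Two things your route buys: first, your accumulation argument is actually tighter than the paper's, whose strict-monotonicity step ends with ``$t^\ast$ is an accumulation point of the at most countable set $\{t > 0 : \Delta X_t(\omega) \neq 0\}$ --- a contradiction'', which as stated proves nothing (countable sets can have accumulation points); the correct obstruction, which you state, is that jumps of size bounded away from zero cannot accumulate without destroying a one-sided limit. Second, your enumeration makes explicit that the infima are \emph{attained}, which is exactly what guarantees $\Delta X_{T_n^A} \in A$ on $\{T_n^A < \infty\}$, hence $S_n^A = T_n^A$ there --- a fact the paper needs but never isolates. A minor remark: under this paper's convention that \emph{all} trajectories are c\`adl\`ag, your local-finiteness argument shows $T_n^A(\omega) \to \infty$ for every $\omega$, so the exceptional set $\Omega_0$ is empty and the qualifier ``up to an evanescent set'' is only needed in settings (such as Protter's) where paths may fail to be c\`adl\`ag on a null set; your closing caveat about such a null set is therefore vacuous here, but harmless.
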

\begin{proof}
\smartqed
In virtue of \cite[Chapter 4, p. 25ff]{P2004} each $T_n^A$ is a ${\bf{F}}$-stopping time and $\Omega_0 \times {\mathbb{R}}^{+}$ is an evanescent set, where $\Omega_0 : = \{\omega \in \Omega : \lim\limits_{n \to \infty} T_n^A(\omega) < \infty\}$. 
% Again, due to \cite{P2004} (Chapter 4, p. 25ff), we only have to show that the set 
% equality $(\ast)$ holds on the complement of $\Omega_\infty \times {\mathbb{R}}^{+}$. 
Fix $(\omega, t) \notin \Omega_0 \times {\mathbb{R}}^{+}$.
{}
%\\[0.5em]
Assume by contradiction that $T_{m_0}^A(\omega) = T_{m_0 + 1}^A(\omega) = : t^\ast$ for some $m_0 \in \mathbb{N}$. By definition of $t^\ast = T_{m_0 + 1}^A(\omega)$, there exists a sequence $(t_n)_{n \in \mathbb{N}}$ such that for all $n \in \mathbb{N}$ $\lim\limits_{n \to \infty} t_n = t^\ast$, $\Delta X_{t_n}(\omega) \in A$, and $t^\ast = T_{m_0}^A(\omega) < t_{n+1} \leq t_n$. Consequently, since $X$ has right-continuous paths, it follows that $\Delta X_{t^\ast}(\omega) = \lim\limits_{n \to \infty} \Delta X_{t_n}(\omega) \in \bar{A}$, implying that $\Delta X_{t^\ast}(\omega) \not= 0$ (since $0 \notin \overline{A}$). Thus $\lim\limits_{n \to \infty} t_n = t^\ast$ is an accumulation point of the at most countable set $\{t > 0 : \Delta X_t(\omega) \not=0 \}$ - a contradiction. 

To prove the set equality let firstly $\Delta X_t(\omega) \in A$. Assume by contradiction that for all $m \in {\mathbb{N}}$ $T_m^A(\omega) \not= t$. Since $\omega \notin \Omega_0$, there is some $m_0 \in {\mathbb{N}} \cap [2, \infty)$ such that $T_{m_0}^A(\omega) > t$. Choose $m_0$ small enough, so that $T_{m_0 - 1}^A(\omega) \leq t < T_{m_0}^A(\omega)$. Consequently, since $\Delta X_t(\omega) \in A$, we must have $t \leq T_{m_0 - 1}^A(\omega)$ and hence $T_{m_0 - 1}^A(\omega) = t$. However, the latter contradicts our assumption. Thus, $\{\Delta X \in A \} \subseteq  \bigcup_{n=1}^{\infty} \rsi T_n^A \lsi \,$.
{}
%\\[0.5em]
The claim now follows from \cite[Theorem 3.19]{HWY1992}.\qed
% and the fact that $\{S_n^A < \infty \} \subseteq \{\Delta X_{T_n^A} \in A\} \subseteq \{S_n^A = T_n^A\}$.  
\end{proof}
\begin{rem}
Note that $\{S_n^A < +\infty \} \subseteq \{\Delta X_{T_n^A} \in A\} \subseteq \{S_n^A = T_n^A\}$. Hence,
\[
\ind_{A}\big(\Delta X_{T_n^A}\big)\ind_{\{T_n^A \leq t\}} = \ind_{\{S_n^A \leq t\}}
\]
for all $n \in {\mathbb{N}}$.
\end{rem}
%\fadd{Wohin damit?}\\
%\\
%In the following let us assume that $X_{0-} : = X_0 : = 0$. 

Next, we recall and rewrite equivalently the construction of a random measure on ${\mathcal{B}}{}({\mathbb{R}}^{+} \times {\mathbb{R}}{})$ (cf. e.\,g. \cite[Def. 1.3]{JS2003}):
\begin{definition}
%Let $(E, \mathcal{E})$ be a measurable Polish space. 
A random measure on $\R^{+} \times \R$ is a family $\mu \equiv (\mu(\omega; d(s, x)) : \omega \in \Omega)$ of non-negative measures on $(\R^{+} \times \R, {\mathcal{B}}{}({\mathbb{R}}^{+} \times {\mathbb{R}}{}))$, satisfying $\mu(\omega; \{0\} \times \R) = 0$ for all $\omega \in \Omega$.   
\end{definition}
Given an adapted $\R$-valued c\`{a}dl\`{a}g process $X$, a particular (integer-valued) random measure (cf. e.\,g. \cite[Prop. 1.16]{JS2003}) is given by the \textit{jump measure of $X$}, defined as
\begin{eqnarray*}
j_X(\omega, B) & : = & \sum_{s
>0}\ind_{\{\Delta X \not= 0\}}(\omega, s)\,\varepsilon_{\big(s, \Delta
X_{s}(\omega)\big)}(B)\\
& = & \sum_{s > 0} \ind_B\big(s, \Delta X_s(\omega)\big)\ind_{{\mathbb{R}}^\ast}(\Delta X_s(\omega))\\
& = & \#\big\{s > 0 : \Delta X_s(\omega) \not= 0 \mbox{ and } \big(s, \Delta X_s(\omega)\big) \in B \big\}\,,
\end{eqnarray*}
%\[
%j_X(\omega, B) = \sum_{s
%>0}\ind_{\{\Delta X \not= 0\}}(\omega, s)\cdot\varepsilon_{\big(s, \Delta
%X_{s}(\omega)\big)}(B)\,,
%\]
where $\varepsilon_{a}$ denotes the Dirac measure at point $a$ and $B \in {\mathcal{B}}{}({\mathbb{R}}^{+} \times {\mathbb{R}}{})$.

%%% A comment begins:
\comment{
Implementing the exhausting series of stopping times $(T_n)_{n \in \N}$ of the thin set $\{\Delta X \not= 0\}$ from Theorem \ref{thm:Dellacherie}, we immediately obtain 
\begin{corollary}
Let $B \in {\mathcal{B}}{}({\mathbb{R}}^{+} \times {\mathbb{R}}{})$ and $\omega \in \Omega$. Then
\begin{eqnarray*}
j_X(\omega, B) & = & \int_{{\mathbb{R}}^{+} \times {\mathbb{R}}} \ind_{B}(s, x) j_X(\omega, d(s,x))\\
& = & \sum_{n=1}^\infty \ind_B\big(T_n(\omega), \Delta X_{T_n(\omega)}(\omega)\big)\\
& = & \#\big\{n \in {\mathbb{N}} : \big(T_n(\omega), \Delta X_{T_n(\omega)}(\omega)\big) \in B \big\}\,.
\end{eqnarray*}
\end{corollary}
\begin{proof}
\smartqed
Since $\ind_{\{\Delta X \not= 0\}}(\omega, s) = \sum_{n=1}^\infty \ind_{\,\csbl T_n \csbr}(\omega, s) = \sum_{n=1}^\infty \ind_{\{T_n(\omega)\}}(s)$, we just have to permute the two sums.
\qed 
\end{proof}
}
%%% The comment finishes here.
Keeping the above representation of the jump measure $j_X$ in mind, we now are going to consider an important special case of a Borel set $B$ on ${\mathbb{R}}^{+} \times {\mathbb{R}}$, leading to the construction of ``stochastic'' integrals with respect to the jump measure $j_X$ including the construction of stochastic jump processes which play a fundamental role in the theory and application of L\'{e}vy processes.
{}
%\\[0.5em] 
To this end, let us consider all Borel sets $B$ on ${\mathbb{R}}^{+} \times {\mathbb{R}}$ of type $B = [0, t] \times A$, where $t \geq 0$ and
\[
A \in {\mathcal{B}}^\ast : = \{A : A \in {\mathcal{B}}({\mathbb{R}}), 0 \notin \overline{A}\}\,.
\]
Obviously, $A \subseteq {\mathbb{R}}\setminus(-\varepsilon, \varepsilon)$ for \textit{all} $\varepsilon > 0$, implying in particular that $A \in {\mathcal{B}}^\ast$ is bounded from below. Let us recall the following
\begin{lemma}\label{thm:Applebaum}
Let $X = (X_t)_{t\geq 0}$ be a c\`{a}dl\`{a}g process. Let $A \in {\mathcal{B}}^\ast$ and $t > 0$. Then $N_X^A(t) : = j_X(\cdot, [0, t] \times A) < \infty$ a.\,s. 
\end{lemma}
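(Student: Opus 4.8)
The plan is to unwind the definition of the jump measure and then reduce the claim to a finiteness property of c\`{a}dl\`{a}g trajectories on compact intervals. By the definition of $j_X$, for fixed $\omega$ we have
\[
N_X^A(t)(\omega) = j_X(\omega, [0,t] \times A) = \#\{s \in (0,t] : \Delta X_s(\omega) \in A\},
\]
where the restriction to $(0,t]$ comes from intersecting the counting condition $s > 0$ with $s \in [0,t]$, and where $\Delta X_s(\omega) \in A$ already entails $\Delta X_s(\omega) \neq 0$ because $0 \notin A$. Since $A \in \mathcal{B}^\ast$, i.e. $0 \notin \overline{A}$, there is some $\varepsilon > 0$ with $A \subseteq \mathbb{R} \setminus (-\varepsilon, \varepsilon)$ (exactly the observation made in the paragraph preceding the lemma), so every jump counted satisfies $|\Delta X_s(\omega)| \geq \varepsilon$. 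The assertion thus amounts to: almost surely only finitely many jump times in $(0,t]$ carry a jump of size $\geq \varepsilon$.

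First I would invoke the iterative exhausting representation of Proposition \ref{thm:iterative exhausting representation}, which already supplies the strictly increasing stopping times $(T_n^A)_{n \in \N}$ together with the decomposition $\{\Delta X \in A\} = \bigcup_n \rsi S_n^A \lsi$ and the evanescent exceptional set $\Omega_0 \times \mathbb{R}^+$, where $\Omega_0 = \{\omega : \lim_n T_n^A(\omega) < \infty\}$ satisfies $\mathbb{P}(\Omega_0) = 0$. For fixed $\omega \notin \Omega_0$, reading off the graphs gives $\{s \in (0,\infty) : \Delta X_s(\omega) \in A\} = \{S_n^A(\omega) : S_n^A(\omega) < \infty\}$, a subsequence of the strictly increasing sequence $(T_n^A(\omega))_n$, which tends to $+\infty$. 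Hence only finitely many of these values lie in $(0,t]$, and therefore $N_X^A(t)(\omega) < \infty$ for every $\omega \notin \Omega_0$, i.e. almost surely.

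Alternatively --- and this is the route I would take were the representation not at hand --- one argues directly that a regulated (in particular c\`{a}dl\`{a}g) trajectory on the compact interval $[0,t]$ admits only finitely many jumps of size $\geq \varepsilon$. Suppose not; then there is a sequence of distinct jump times $s_k \in [0,t]$ with $|\Delta X_{s_k}(\omega)| \geq \varepsilon$, and by compactness we may assume $s_k \to s^\ast$. If infinitely many $s_k$ approach $s^\ast$ from the right, right-continuity at $s^\ast$ forces both $X_{s_k}(\omega)$ and $X_{s_k-}(\omega)$ to converge to $X_{s^\ast}(\omega)$, so $|\Delta X_{s_k}(\omega)| \to 0$; if infinitely many approach from the left, the existence of the left limit $X_{s^\ast-}(\omega)$ forces the same conclusion. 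Either case contradicts $|\Delta X_{s_k}(\omega)| \geq \varepsilon$. This finiteness holds for every $\omega$, so in fact $N_X^A(t) < \infty$ surely, a slightly stronger statement than claimed.

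I expect the only genuinely delicate point to be the accumulation-point analysis in the self-contained argument: one must split according to whether the putative accumulating jump times approach $s^\ast$ from the left or from the right and then apply, respectively, the existence of the left limit and right-continuity to squeeze $|\Delta X_{s_k}|$ to $0$. Once this dichotomy is set up the remainder is routine bookkeeping. I would also stress that the separation $0 \notin \overline{A}$, hence the passage to $\varepsilon$, is essential: without it the full jump set $\{s : \Delta X_s(\omega) \neq 0\} \cap (0,t]$ may be countably infinite (as for infinite-activity L\'{e}vy processes), so finiteness of $N_X^A(t)$ would genuinely fail.
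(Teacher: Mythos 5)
Your proof is correct, but it takes a genuinely different route from the paper, whose entire proof of Lemma~\ref{thm:Applebaum} is the one-line citation ``This is \cite[Lemma 2.3.4.]{A2009}''; you instead supply two self-contained arguments. The first unwinds Proposition~\ref{thm:iterative exhausting representation}: since the evanescence of $\Omega_0 \times {\mathbb{R}}^{+}$ asserted there (and borrowed from \cite{P2004}) says precisely that $T_n^A \to \infty$ almost surely, the finiteness of $N_X^A(t)$ follows by simply reading off the $\omega$-sections of $\{\Delta X \in A\} = \bigcup_n \rsi S_n^A \lsi$. This is logically legitimate within the paper's architecture (the Proposition precedes the Lemma), but you should be aware that it is close to tautological: the evanescence statement is essentially a restatement of the Lemma, and Protter establishes it by exactly the accumulation argument you give as your second route. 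That second, direct argument --- no accumulation point of jump times carrying jumps of modulus $\geq \varepsilon$ on a compact interval, with the case split according to whether the putative jump times approach $s^\ast$ from the right (both $X_{s_k}$ and $X_{s_k-}$ then converge to $X_{s^\ast}$ by right-continuity) or from the left (both converge to $X_{s^\ast-}$) --- is essentially Applebaum's own proof of the cited Lemma 2.3.4, handled correctly, and it even yields the slightly stronger conclusion that $N_X^A(t) < \infty$ \emph{surely}, since the paper's definition of a c\`{a}dl\`{a}g process requires \emph{all} trajectories to be c\`{a}dl\`{a}g. In short: the paper outsources the proof; you prove it, twice, and both proofs are sound --- the second one is the one worth keeping, as it is self-contained and does not lean on a result whose own justification is the same argument.
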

\begin{proof}
This is \cite[Lemma 2.3.4.]{A2009}.
\qed
\end{proof}
\begin{prop}\label{thm:integral}
Let $X = (X_t)_{t\geq 0}$ be a c\`{a}dl\`{a}g process and $f : {\mathbb{R}}^{+} \times {\mathbb{R}} \rightarrow {\mathbb{R}}$ be measurable. Let $A \in {\mathcal{B}}^\ast$ and $t > 0$. Then for all $\omega \in \Omega$ the function $\ind_{[0, t] \times A} \, f$ is a.\,s. integrable with respect to the jump measure $j_X(\omega, d(s,x))$, and 
\begin{eqnarray*}
& {} & \int\limits_{[0, t] \times A} f(s, x) j_X(\omega, d(s,x)) {}\\
& =  & \sum_{0 < s \leq t} f\big(s, \Delta X_s(\omega)\big)\ind_{A}(\Delta X_s(\omega)){}\\
& = & \sum_{n=1}^\infty f\big(T_n(\omega), \Delta X_{T_n(\omega)}(\omega)\big)\ind_{A}\big(\Delta X_{T_n(\omega)}\big)\ind_{\{T_n \leq t\}}(\omega). 
\end{eqnarray*}
{}
Moreover, given $\omega \in \Omega$ there exists $c_t^A(\omega) \in \R^{+}$ such that 
%\vspace{-0.5cm}
\[
\int\limits_{[0, t] \times A} \vert f(s, x) \vert j_X(\omega, d(s,x)) \leq c_t^A(\omega) \, j_X(\omega, [0, t] \times A)\,.
\]
\end{prop}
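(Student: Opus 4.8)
The plan is to establish the integrability and the explicit summation formula first, and then deduce the final bound from the fact that $A \in \mathcal{B}^\ast$ forces the jumps counted on $[0,t] \times A$ to be bounded away from zero, hence finite in number (by Lemma~\ref{thm:Applebaum}) and uniformly bounded in modulus on any finite time window.

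First I would record the summation identity. By the very definition of $j_X(\omega, d(s,x))$ as the counting measure $\sum_{s>0} \ind_{\{\Delta X \neq 0\}}(\omega,s)\,\varepsilon_{(s, \Delta X_s(\omega))}$, integrating any measurable $g$ against $j_X(\omega, \cdot)$ amounts to summing $g(s, \Delta X_s(\omega))$ over the jump times $s$ with $\Delta X_s(\omega) \neq 0$. Taking $g = \ind_{[0,t]\times A}\,f$ gives
\[
\int\limits_{[0,t]\times A} f(s,x)\, j_X(\omega, d(s,x)) = \sum_{0 < s \leq t} f\big(s, \Delta X_s(\omega)\big)\,\ind_A\big(\Delta X_s(\omega)\big),
\]
where I would note that the indicator $\ind_{\{\Delta X \neq 0\}}$ is automatically absorbed, since $\ind_A(\Delta X_s(\omega)) = 0$ whenever $\Delta X_s(\omega) = 0$ (because $0 \notin \overline{A}$). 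The passage to the second equality, the sum over $n$ indexed by the exhausting stopping times $(T_n)$ of Theorem~\ref{thm:Dellacherie}, is then a reindexing: since $\{\Delta X \neq 0\} = \dju{n=1}{\infty}{3.3}{2.0}\rsi T_n \lsi$ is a disjoint union of graphs, each jump time $s \leq t$ with $\Delta X_s(\omega) \neq 0$ corresponds to a unique $n$ with $T_n(\omega) = s \leq t$, so I would simply substitute and attach the factor $\ind_{\{T_n \leq t\}}(\omega)$.

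The integrability and the final bound are where the hypothesis $A \in \mathcal{B}^\ast$ does the real work. Since $0 \notin \overline{A}$, the set $\{s : \Delta X_s(\omega) \in A,\ 0 < s \leq t\}$ is finite for a.e.\ $\omega$ — this is precisely Lemma~\ref{thm:Applebaum}, which gives $N_X^A(t) = j_X(\omega, [0,t]\times A) < \infty$ a.s. The sum above therefore has only finitely many nonzero terms (a.s.), so it converges absolutely and $\ind_{[0,t]\times A}\,f$ is $j_X(\omega, \cdot)$-integrable a.s. For the inequality, I would set
\[
c_t^A(\omega) := \max\Big\{ |f(s,x)| : 0 < s \leq t,\ x \in \{\Delta X_r(\omega) : 0 < r \leq t\} \cap A \Big\},
\]
the maximum of $|f|$ over the finite set of $(s,x)$-pairs that actually occur as jumps on $[0,t]$ landing in $A$ (taking $c_t^A(\omega) = 0$ if there are no such jumps). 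Then bounding $|f|$ by this maximum in the absolute-value integral yields
\[
\int\limits_{[0,t]\times A} |f(s,x)|\, j_X(\omega, d(s,x)) \leq c_t^A(\omega)\, j_X(\omega, [0,t]\times A),
\]
since the right-hand measure counts exactly the number of terms in the sum.

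The main obstacle is the delicate point about the a.s.\ qualifier and the finiteness: the maximum defining $c_t^A(\omega)$ is a genuine (finite) maximum only on the full-measure set where $N_X^A(t) < \infty$, and I would have to be careful to state the inequality either on that set or with the convention that $c_t^A(\omega) = +\infty$ is allowed on the exceptional null set (where both sides may be infinite and the inequality holds trivially). A secondary subtlety is justifying that integration against the counting measure $j_X$ really reduces to the pointwise sum for a general measurable $f$ of arbitrary sign; I would handle this by the standard monotone-class / absolute-integrability argument, first for $f \geq 0$ via monotone convergence on indicators, then splitting $f = f^+ - f^-$ and invoking the already-established finiteness to rule out an $\infty - \infty$ ambiguity.
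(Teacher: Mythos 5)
Your proposal is correct and takes essentially the same approach as the paper: both reduce the integral to the finite sum over the realized jump pairs $(s,\Delta X_s(\omega))$ with $\Delta X_s(\omega)\in A$ and $0<s\leq t$, take $c_t^A(\omega)$ to be the maximum of $\vert f\vert$ over that finite set, and close with a standard monotone class argument. Two minor points: your displayed definition of $c_t^A(\omega)$ lets $s$ range over all of $(0,t]$ instead of pairing it with $\Delta X_s(\omega)$ as your prose correctly describes, and the finiteness of the jumps in $A$ on $[0,t]$ is a deterministic property of each individual c\`{a}dl\`{a}g path (which is how the paper uses it, for every fixed $\omega$), so the a.s. caveats you raise at the end are not actually needed.
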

%%% Proceed with a monotone class argument as usual.
\begin{proof}%\renewcommand{\qedsymbol}{}
Fix $\omega \in \Omega$ and consider the measurable function $g_t^A : = \ind_{[0, t] \times A}\, f$. Then ${\mathbb{R}}^{+} \times {\mathbb{R}} = B_1(\omega) \dotcup B_2(\omega)$, where $B_1(\omega) : = \{(s, \Delta X_s(\omega) : s > 0\}$ and $B_2(\omega) : = {\mathbb{R}}^{+} \times {\mathbb{R}}\setminus B_1(\omega)$. Obviously, we have 
\[
j_X(\omega, B_2(\omega)) = \sum\limits_{s > 0} \ind_{B_2(\omega)}\big(s, \Delta X_s(\omega)\big)\ind_{{\mathbb{R}}^\ast}(\Delta X_s(\omega)) = 0\,, 
\]
implying that $I_2 : = \int\limits_{B_2(\omega)} \vert g_t^A(s, x) \vert j_X(\omega, d(s,x)) = 0$. Put $I_1 : = \int\limits_{B_1(\omega)} \vert g_t^A(s, x) \vert j_X(\omega, d(s,x))$. Since on $[0, t]$ the c\`{a}dl\`{a}g path $s \mapsto X_s(\omega)$ has only finitely many jumps in $A \in {\mathcal{B}}^\ast$ there exist finitely many elements $(s_1, \Delta X_{s_1}(\omega)), \ldots, (s_N, \Delta X_{s_N}(\omega))$ which all are elements of $\big([0, t] \times A\big) \cap B_1(\omega)$ (for some $N = N(\omega, t, A) \in {\mathbb{N}}$). Put 
\[
0 \leq c_t^A(\omega) : = \max\limits_{1 \leq k \leq N}{}\vert f(s_k, \Delta X_{s_k}(\omega))\vert < \infty \,. %\mbox{ a.\,s.}\,.
\]
Then
\[
\vert g_t^A \vert = \ind_{[0, t] \times A} \, \vert f \vert \leq  c_t^A(\omega) \, \ind_{[0, t] \times A} \mbox{ on } B_1\,, 
\] 
and it follows that $I_2 \leq c_t^A(\omega) \, j_X(\omega, [0,t] \times A)$. A standard monotone class argument finishes the proof.\qed
\end{proof}
{}
\begin{rem}
Note that in terms of the previously discussed stopping times $S_n^A$ we may write   
\[
\int\limits_{[0, t] \times A} f(s, x) j_X(\omega, d(s,x)) = \sum_{n=1}^\infty f\big(S_n^A(\omega), \Delta X_{S_n^A(\omega)}(\omega)\big)\ind_{\{S_n^A \leq t\}}(\omega)\,.
\]
\end{rem}
{}
In the case of a L\'{e}vy process $X$ the following important special cases $f(s,x) : = 1$ and $f(s,x) : = x$ are embedded in the following crucial result (cf. e.\,g. \cite{A2009}):
\begin{thm}
Let $X = (X_t)_{t\geq 0}$ be a (c\`{a}dl\`{a}g) L\'{e}vy process and $A \in {\mathcal{B}}^\ast$.
\begin{description}[(ii)]
\item[(i)] Given $t \geq 0$ 
\begin{eqnarray*}
N_X^A(t) & = & \int\limits_{A} N_X^{dx}(t) := j_X(\cdot, [0, t] \times A) = \int\limits_{[0, t] \times A} j_X(\cdot, d(s,x)) \\
& \,\,=  & \sum_{0 < s \leq t} \ind_{A}(\Delta X_s) \, = \, \sum_{n=1}^\infty \ind_{A}\big(\Delta X_{T_n}\big)\ind_{\{T_n \leq t\}} \, = \, \sum_{n=1}^\infty \ind_{\{S_n^A \leq t\}} 
\end{eqnarray*}
induces a Poisson process $N_X^A = \big(N_X^A(t)\big)_{t \geq 0}$ with intensity measure $\nu_X(A) : = {\mathbb{E}}[N_X^A(1)] < \infty$.
\item[(ii)] Given $t \geq 0$ and a Borel measurable function $g : {\mathbb{R}} \longrightarrow {\mathbb{R}}$
\begin{eqnarray*}
Z_X^A(t) & : = & \int\limits_{A} g(x)\, N_X^{dx}(t) = \int\limits_{[0, t] \times A} g(x)\, j_X(\cdot, d(s,x))\\
& \,\, = & \sum_{0 < s \leq t} g\big(\Delta X_s\big) \, \ind_{A}(\Delta X_s) = \sum_{n=1}^\infty g\big(\Delta X_{T_n}\big)\ind_{A}\big(\Delta X_{T_n}\big)\ind_{\{T_n \leq t\}}\\
& \,\, = & \sum_{n=1}^\infty g\big(\Delta X_{S_n^A}\big)\ind_{\{S_n^A \leq t\}} = \sum_{n = 1}^{N_X^A(t)}g\big(\Delta X_{S_n^A}\big) 
\end{eqnarray*}
induces a compound Poisson process $Z_X^A = \big(Z_X^A(t)\big)_{t \geq 0}$. Moreover, if $g \in L^1(A, \nu_X)$ then ${\mathbb{E}}[Z_X^A(t)] = t\nu_X(A){\mathbb{E}}[g\big(\Delta X_{S_1^A}\big)]$.
\end{description}
\end{thm}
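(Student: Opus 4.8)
The plan is to regard the two displayed chains of identities as bookkeeping consequences of Proposition~\ref{thm:integral} and to concentrate the argument on the two genuinely probabilistic claims: that $N_X^A$ is a Poisson process and that $Z_X^A$ is compound Poisson.

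For part~(i), specialising $f \equiv 1$ in Proposition~\ref{thm:integral} and invoking the preceding Remark on the stopping times $S_n^A$ yields the displayed equalities at once, so that $N_X^A(t)$ literally counts the jumps of $X$ in $(0,t]$ whose size lies in $A$; by Lemma~\ref{thm:Applebaum} this count is a.\,s. finite. I would then verify the defining properties of a Poisson process for $(N_X^A(t))_{t \geq 0}$. That it starts at $0$, is right-continuous, non-decreasing and integer-valued is path-wise; its jumps have unit size because at each time $s$ the process increases by $\ind_A(\Delta X_s) \in \{0,1\}$, while Lemma~\ref{thm:Applebaum} guarantees only finitely many increases in each $[0,t]$, so that $N_X^A$ is a genuine c\`{a}dl\`{a}g counting process. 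The independence and stationarity of increments are exactly where the L\'{e}vy hypothesis enters: for $0 \leq s < t$ the increment $N_X^A(t) - N_X^A(s)$ is a measurable functional of $(X_{s+u} - X_s)_{0 \leq u \leq t-s}$, whence independence of $\mathcal{F}_s$ follows from the independence of increments and equality in law with $N_X^A(t-s)$ from their stationarity. A counting process with stationary independent increments is Poisson by the classical characterisation, which I would cite rather than reprove. Finiteness of the rate $\nu_X(A) = {\mathbb{E}}[N_X^A(1)]$ is then automatic, since a Poisson variable that is a.\,s. finite has finite parameter, and ${\mathbb{E}}[N_X^A(t)] = t\,\nu_X(A)$.

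For part~(ii), applying Proposition~\ref{thm:integral} with $f(s,x) := g(x)$ and rewriting via $(S_n^A)_n$ gives $Z_X^A(t) = \sum_{n=1}^{N_X^A(t)} g(\Delta X_{S_n^A})$. The substance is to show that this is a random sum of compound-Poisson type, i.\,e. that the marks $(g(\Delta X_{S_n^A}))_{n \geq 1}$ are i.\,i.\,d. and independent of the counting process $N_X^A$. This is the main obstacle. I would resolve it by appealing to the marked Poisson point process structure of the jumps of a L\'{e}vy process: the point set $\{(s, \Delta X_s) : \Delta X_s \neq 0\}$ forms a Poisson random measure on $\R^+ \times \R$ with intensity $dt \otimes \nu_X(dx)$; restricting to the fibre over $A$ and conditioning on $\{N_X^A(t) = n\}$, the jump sizes become i.\,i.\,d. with normalised law $\nu_X(\cdot \cap A)/\nu_X(A)$, independently of $n$. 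Granting this, $Z_X^A$ inherits stationary and independent increments from $N_X^A$ and is therefore compound Poisson.

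The expectation formula is then Wald's identity: conditioning on $N_X^A(t)$ and using the independence just described gives ${\mathbb{E}}[Z_X^A(t)] = {\mathbb{E}}[N_X^A(t)]\,{\mathbb{E}}[g(\Delta X_{S_1^A})] = t\,\nu_X(A)\,{\mathbb{E}}[g(\Delta X_{S_1^A})]$, the interchange of summation and expectation being licensed by $g \in L^1(A, \nu_X)$ together with the a.\,s. finiteness of the index range from Lemma~\ref{thm:Applebaum}.
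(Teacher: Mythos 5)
The paper never proves this theorem: it is stated as a known result with the pointer ``cf.\ e.\,g.\ \cite{A2009}'', so there is no internal proof to compare against; the natural benchmark is the proof in Applebaum's book, which is what the citation delegates to. Your part (i) reconstructs that standard argument faithfully and correctly: the chain of identities is bookkeeping from Proposition~\ref{thm:integral} with $f \equiv 1$ together with the Remark on the stopping times $S_n^A$, a.\,s.\ finiteness is Lemma~\ref{thm:Applebaum}, and the Poisson property follows because the increment $N_X^A(t)-N_X^A(s)$ is a measurable functional of the shifted increment process of $X$, so the L\'{e}vy hypothesis hands over independence and stationarity, after which the classical characterization of counting processes with stationary, independent, unit-size increments applies (your parenthetical argument that a.\,s.\ finiteness forces a finite parameter is also sound). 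In part (ii) you take a genuinely different route from the cited source: Applebaum obtains the compound Poisson property by computing the characteristic function of $\int_A g\, dN$ (first for simple $g$, using that the jump measure charges disjoint sets independently, then passing to the limit), whereas you invoke the marked Poisson point process structure of the L\'{e}vy jumps --- i.\,i.\,d.\ marks with law $\nu_X(\cdot \cap A)/\nu_X(A)$, independent of the counting process --- and finish with Wald's identity. Both routes are valid; yours yields the expectation formula in exactly the stated form $t\,\nu_X(A)\,{\mathbb{E}}\big[g\big(\Delta X_{S_1^A}\big)\big]$, and gives a cleaner probabilistic picture, but it imports a heavier external fact (that $\{(s,\Delta X_s) : \Delta X_s \neq 0\}$ is a Poisson random measure with intensity $dt \otimes \nu_X$, essentially the jump part of the L\'{e}vy--It\^{o} decomposition), while the characteristic-function computation is more elementary and self-contained. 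Since the paper itself settles the theorem by citation, your proposal --- correct in outline, with the two hard steps (the Poisson characterization and the Poisson-random-measure structure) themselves cited rather than proved --- is a legitimate expansion of what the paper does, not a deviation from it.
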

%\section{\fadd{A canonical iterative construction of exhausting representations with applications to jump measures}}
%\label{sec:2}
\section{Jump Measures of Optional Processes with Regulated Trajectories}
\label{sec:3}
One of the aims of our paper is to transfer particularly Theorem \ref{thm:Dellacherie} to the class of optional processes with regulated trajectories in order to construct a well-defined jump measure of such optional processes.

As we have seen the right-continuity of the paths of $X$ plays a significant role in the proof of Proposition \ref{thm:iterative exhausting representation}. We will see that a similar result holds for optional processes with regulated trajectories. However, it seems that we cannot simply implement the above sequence $(S_n^A)_{n \in \N}$ if the paths of $X$ are not right-continuous.

Our next contribution shows that we are not working with ``abstract nonsense'' only:
\begin{ex}
Optional processes which do not necessarily have right-continuous paths have emerged as naturally appearing candidates in the framework of enlarged filtration in financial mathematics (formally either describing ``insider trading information'' or ``extended information by inclusion of the default time of a counterparty'') including the investigation of the problem whether the no-arbitrage conditions are stable with respect to a progressive enlargement of filtration and how an arbitrage-free semimartingale model is affected when stopped at a random horizon (cf. \cite{ACDJ_1_2013}, \cite{ACDJ_2_2013} and \cite{ACDJ_3_2013}). 

Given a random time $\tau$, one can construct the smallest right-continuous filtration $\mathbb{G}$ which contains the given filtration $\mathbb{F}$ and makes $\tau$ a $\mathbb{G}$-stopping time (known as \textit{progressive enlargement of $\mathbb{F}$ with $\tau$}). Then one can associate to $\tau$ the two $\mathbb{F}$-supermartingales $Z$ and $\widetilde{Z}$, defined through
\[
Z_t : = {\mathbb{P}}(\tau > t \vert {\mathcal{F}}_t ) \textup { and } {\widetilde{Z}}_t : = {\mathbb{P}}(\tau \geq t \vert {\mathcal{F}}_t )\,.
\]   
$Z$ is c\`{a}dl\`{a}g, while $\widetilde{Z}$ is an optional process with regulated trajectories only.
\end{ex}
A first step towards the construction of a similar iterative and implementable exhausting representation of the set $\{\Delta X \not= 0\}$ for optional processes is encoded in the following
\begin{prop}\label{thm:finitely_layered_jump_rep}
Let $f : {\R^{+}} \longrightarrow {\mathbb{R}}$ be an arbitrary regulated function. Then 
\[
J_f = \dju{n=1}{\infty}{3.3}{2.0}D_n\,,
\] 
where each $D_n$ is a finite set.
\end{prop}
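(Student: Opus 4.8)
The plan is to prove a single local finiteness statement and then to slice $J_f$ twice: once according to the size of the jump, and once according to its location on $\R^{+}$.

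The crucial step, and the one I expect to be the \emph{main obstacle}, is the following local refinement of the countability fact recalled above: for every $\varepsilon > 0$ and every $N \in \N$, the set $J_f(\varepsilon) \cap (0, N]$ is not merely countable but \emph{finite}. I would argue by contradiction. If it were infinite, then, being a bounded subset of $(0, N]$, it would contain a sequence $(t_k)_{k \in \N}$ of pairwise distinct points converging to some $s \in [0, N]$, and after passing to a subsequence I may assume $(t_k)$ is monotone. If $t_k \downarrow s$ with $t_k > s$, I would exploit that $f(s+)$ exists (since $f$ is regulated): given $\eta > 0$ choose $\delta > 0$ with $|f(u) - f(s+)| < \eta$ for all $u \in (s, s + \delta)$; for $k$ large enough $t_k < s + \delta$, and letting $u \uparrow t_k$ respectively $u \downarrow t_k$ inside $(s, s+\delta)$ yields $|f(t_k-) - f(s+)| \leq \eta$ and $|f(t_k+) - f(s+)| \leq \eta$, hence $|\Delta f(t_k)| = |f(t_k+) - f(t_k-)| \leq 2\eta$. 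Taking $\eta < \varepsilon/2$ contradicts $|\Delta f(t_k)| > \varepsilon$. If instead $t_k \uparrow s$ with $t_k < s$ (so that $s > 0$ and $f(s-)$ exists), the symmetric argument using $f(s-)$ gives the same contradiction.

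With this local finiteness in hand, the decomposition is routine. First I slice by jump size: set $R_1 := J_f(1)$ and $R_m := J_f(1/m) \setminus J_f(1/(m-1))$ for $m \geq 2$, so that $R_m = \{t > 0 : 1/m < |\Delta f(t)| \leq 1/(m-1)\}$. Since the sets $J_f(1/m)$ increase in $m$ with union $J_f$, the family $(R_m)_{m \in \N}$ is pairwise disjoint and $\bigcup_{m} R_m = J_f$. Next I slice by location: put $D_{m,k} := R_m \cap (k-1, k]$ for $m, k \in \N$. Each $D_{m,k}$ is contained in $J_f(1/m) \cap (0, k]$ and is therefore finite by the local finiteness step, while the doubly-indexed family $(D_{m,k})_{m, k \in \N}$ is pairwise disjoint with union $J_f$.

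Finally I would discard the empty members and relabel the remaining countably many sets $D_{m,k}$ as a single sequence $(D_n)_{n \in \N}$ through any bijection $\N \times \N \to \N$, which yields the asserted representation of $J_f$ as a pairwise disjoint union of finite sets.
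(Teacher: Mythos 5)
Your proposal is correct and is essentially the paper's own argument: the paper likewise writes $J_f$ as the pairwise disjoint union of the sets $\{t \in (n-1,n] : |\Delta f(t)| \in (\tfrac{1}{m+1},\tfrac{1}{m}]\}$ (plus the jumps exceeding $1$), i.e.\ the same double slicing by unit time intervals and jump-size annuli, merely performed in the opposite order. The only difference is that the paper cites Bolzano--Weierstrass (with references to Burk and Hobson) for the finiteness of $J_f(\varepsilon)$ on bounded intervals, whereas you prove this key step directly from the existence of one-sided limits --- a self-contained refinement, not a different route.
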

\begin{proof}
\smartqed
Since $(0, \infty) = \dju{n=1}{\infty}{7.0}{6.0}(n-1, n]$ it follows that $J_f =  \dju{n=1}{\infty}{7.0}{6.0}J_{f_n}$, where $f_n : = f\vert_{(n-1, n]}$ denotes the restriction of $f$ to the interval $(n-1, n]$. Fix $n \in \N$. Since every bounded infinite set of real numbers has a limit point (by Bolzano-Weierstrass) the at most countable set 
\[
J_{f_n}(\frac{1}{m}) = \big\{t: n-1 < t \leq n \textup{ and } \vert \Delta f(t) \vert > \frac{1}{m}\big\}
\]
must be already finite for each $m \in \N$ (cf. \cite[Theorem 2.6]{B1998} and \cite[p. 286-288]{Ho1921}). Moreover, $J_{f_n}(\frac{1}{m}) \subseteq J_{f_n}(\frac{1}{m+1})$ for all $m \in \N$. Consequently, we have
\[
J_{f_n} = \bigcup_{m = 1}^{\infty} J_{f_n}(\frac{1}{m}) = \dju{m=1}{\infty}{3.4}{1.8}A_{m,n},
\]
where $A_{1, n} : = J_{f_n}(1) = \{\vert \Delta f_n \vert > 1\}$ and $A_{m+1, n} := \{\Delta f_n \in \big(\frac{1}{m+1}, \frac{1}{m}\big]\}$ for all $m \in \N$, and hence
\[
J_f = \dju{n=1}{\infty}{3.4}{1.8}\dju{m=1}{\infty}{3.4}{1.8}A_{m,n}\,.
\]
Since $A_{m, n} \subseteq J_{f_n}(\frac{1}{m})$ for all $m \in \N$, each set $A_{m,n}$ consists of finitely many elements only.
\qed
\end{proof}
\begin{lemma}\label{lemma:recursive construction of finite sets}
Let $\emptyset \not= D$ be a finite subset of $\R$, consisting of
$\kappa_D$ elements. Consider
\[
s_1^D : = \min(D)
\]
and, if $\kappa_D \geq 2$,
\[
s_{n}^D := \min(D \cap (s_{n-1}^D, \infty)\big) = \min\{t > s_{n-1}^D: t \in D\},
\]
where $n \in \{2, 3, \ldots, \kappa_D\}$. Then $D \cap (s_{n-1}^D, \infty) \not= \emptyset$ and $s_{n-1}^D < s_{n}^D$ for all $n \in \{2, 3, \ldots, \kappa_D\}$. Moreover, we have
\[
D = \big\{s_1^D, s_2^D, \ldots, s_{\kappa_D}^D\big\}\,.
\]
\end{lemma}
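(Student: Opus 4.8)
The plan is to proceed by induction on $n$, the guiding idea being that after $n$ steps the construction has singled out precisely the $n$ smallest elements of $D$, listed in strictly increasing order. First I would record the elementary but essential fact underlying the whole argument: since $D$ is finite, \emph{every} nonempty subset of $D$ attains its infimum, so each $s_n^D$ is genuinely well-defined as soon as the set of which it is declared the minimum is shown to be nonempty. The entire difficulty is therefore shifted onto guaranteeing this nonemptiness at every stage $n \le \kappa_D$.

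Concretely, I would prove by induction on $n \in \{1, \ldots, \kappa_D\}$ the strengthened invariant that $s_n^D$ is well-defined, that $s_{n-1}^D < s_n^D$ for $n \ge 2$, and — crucially — that
\[
D \cap (-\infty, s_n^D] = \big\{s_1^D, s_2^D, \ldots, s_n^D\big\}
\]
consists of exactly $n$ distinct elements. The base case $n = 1$ is immediate from $s_1^D = \min(D)$, as nothing in $D$ lies strictly below its minimum. For the inductive step, assuming the invariant for some $n < \kappa_D$, I would split $D$ into the disjoint union $\big(D \cap (-\infty, s_n^D]\big) \dotcup \big(D \cap (s_n^D, \infty)\big)$. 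By the induction hypothesis the first part has exactly $n$ elements, while $\kappa_D > n$; hence the second part is nonempty, so $s_{n+1}^D = \min\big(D \cap (s_n^D, \infty)\big)$ exists and satisfies $s_{n+1}^D > s_n^D$ by construction. Since $s_{n+1}^D$ is minimal in $(s_n^D, \infty)$, no element of $D$ lies in the open interval $(s_n^D, s_{n+1}^D)$, so adjoining $s_{n+1}^D$ to the previous initial segment yields $D \cap (-\infty, s_{n+1}^D] = \{s_1^D, \ldots, s_{n+1}^D\}$ with $n+1$ distinct elements, closing the induction. Note that this single step simultaneously delivers the asserted nonemptiness of $D \cap (s_{n-1}^D, \infty)$ and the strict monotonicity $s_{n-1}^D < s_n^D$.

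Finally I would apply the invariant at $n = \kappa_D$: it produces a subset $\{s_1^D, \ldots, s_{\kappa_D}^D\}$ of $D$ with exactly $\kappa_D$ distinct elements, and since $D$ itself has only $\kappa_D$ elements, the two sets must coincide, giving the claimed enumeration. The one place where genuine care is needed — and the natural spot where a careless argument could fail — is verifying that the recursion does not halt prematurely, i.e. that $D \cap (s_{n-1}^D, \infty) \neq \emptyset$ for \emph{every} $n \le \kappa_D$ rather than merely for small $n$. This is exactly what the cardinality bookkeeping built into the induction hypothesis is designed to enforce, so once the invariant is stated with the explicit count of $n$ elements, no separate obstacle remains.
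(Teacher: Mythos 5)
Your proof is correct, and it is organized genuinely differently from the paper's. The paper argues by contradiction with a minimal counterexample: it supposes the recursion halts prematurely, takes the smallest index $m^\ast \leq \kappa_D - 1$ with $D \cap (s_{m^\ast}^D, \infty) = \emptyset$, shows via a nested contradiction (using exactly your observation that minimality of each $s_{l+1}^D$ leaves no element of $D$ strictly between $s_l^D$ and $s_{l+1}^D$) that then $D = \{s_1^D, \ldots, s_{m^\ast}^D\}$, and concludes the cardinality contradiction $\#(D) = m^\ast < \kappa_D$. You instead run a direct forward induction on the strengthened invariant $D \cap (-\infty, s_n^D] = \{s_1^D, \ldots, s_n^D\}$ with exactly $n$ elements, so that at each stage $n < \kappa_D$ the cardinality count immediately forces $D \cap (s_n^D, \infty) \neq \emptyset$, and the final enumeration drops out at $n = \kappa_D$ by comparing cardinalities. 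The mathematical ingredients are identical in both proofs (no-gaps from minimality, plus element counting), but your single induction delivers well-definedness, strict monotonicity, and the enumeration simultaneously, avoiding the paper's two layers of contradiction; the paper's version, in exchange, makes the possibility of premature halting the explicit object of study, which is the point the author evidently wanted to emphasize. Your rendering is the more economical and constructive of the two.
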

\begin{proof}
\smartqed
Let $\kappa_D \geq 2$. Obviously, it follows that $D \cap (s_{1}^D, \infty) \not= \emptyset$. Now assume by contradiction that there exists $n \in \{2, \ldots, \kappa_D - 1\}$ such that $D \cap (s_{n}^D, \infty) = \emptyset$. Let $m^\ast$ be the smallest $m \in \{2, \ldots, \kappa_D - 1\}$ such that $D \cap (s_{m}^D, \infty) = \emptyset$. Then $s_{k}^D : = \min(D \cap (s_{k-1}^D, \infty)\big)
\in D$ is well-defined for all $k \in \{2, \ldots, m^\ast\}$, and we obviously have $s_{1}^D < s_{2}^D < \ldots < s_{m^\ast}^D$. Moreover, by construction of $m^\ast$, it follows that
\begin{equation}\label{eq:induction statement}
s \leq s_{m^\ast}^D \textup{ for all } s \in D.
\end{equation}
Assume now that there exists $\widetilde{s} \in D$ such that $\widetilde{s} \not\in \{s_{1}^D, s_{2}^D, \ldots, s_{m^\ast}^D\}$. Then, by (\ref{eq:induction statement}), there must exist $l \in \{1, 2, \ldots, m^\ast-1 \}$ such that $s_{l}^D < \widetilde{s} < s_{l+1}^D$, which is a contradiction, due to the definition of $s_{l+1}^D$. Hence, $\widetilde{s}$ cannot exist, and it consequently follows that $D = \{s_{1}^D, s_{2}^D, \ldots, s_{m^\ast}^D\}$. But then $m^\ast = \#(D) \leq \kappa_D - 1 < \kappa_D$, which is a contradiction. Hence, $D \cap (s_{n}^D, \infty) \not= \emptyset$ for any $n \in \{2, \ldots, \kappa_D - 1\}$, implying that $s_{n}^D \in D$ is well-defined and $s_{n}^D < s_{n+1}^D$ for all $n \in \{1, 2, \ldots, \kappa_D - 1\}$. Clearly, we must have $D = \big\{s_1^D, s_2^D, \ldots, s_{\kappa_D}^D\big\}$.
\qed
%Since $\textup{card}(D) = \kappa_D$,  
\end{proof}
Let $A \subseteq \Omega \times {\mathbb{R}}^{+}$ and $\omega \in
\Omega$. Consider
\[
D_A(\omega) : = \inf \{t \in  {\mathbb{R}}^{+} : (\omega , t) \in A\}
\in [0, \infty]
\]
$D_A$ is said to be the \textit{d\'{e}but} of $A$. Recall that
$\inf(\emptyset) = + \infty$ by convention. $A$ is called a
\textit{progressive set} if $\ind_{A}$ is a progressively measurable
process. For a better understanding of the main ideas in the proof
of Theorem \ref{thm:jumps of optional processes and stopping
times}, we need the following non-trivial result (a detailed proof
of this statement can be found in e.\,g. \cite{HWY1992}):
\begin{theorem}\label{thm:optional debut is stopping time}
Let $A \subseteq \Omega \times {\mathbb{R}}^{+}$. If $A$ is
a progressive set, then $D_A$ is a stopping time.
\end{theorem}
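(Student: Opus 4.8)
The plan is to reduce the assertion to a measurable-projection statement, exploiting both halves of the usual conditions. Because the filtration is right-continuous, it suffices to show that $\{D_A < t\} \in \mathcal{F}_t$ for every $t > 0$: granting this, one obtains
\[
\{D_A \leq t\} = \bigcap_{n \in \N} \{D_A < t + \tfrac{1}{n}\} \in \bigcap_{n \in \N} \mathcal{F}_{t + 1/n} = \mathcal{F}_{t+} = \mathcal{F}_t,
\]
so that $D_A$ is a stopping time. The whole argument thus hinges on the single-slice measurability $\{D_A < t\} \in \mathcal{F}_t$.

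First I would rewrite this event as a projection onto $\Omega$. Directly from the definition of the d\'{e}but,
\[
\{D_A < t\} = \{\omega \in \Omega : \exists\, s \in [0, t) \textup{ with } (\omega, s) \in A\} = \pi_\Omega\big(A \cap (\Omega \times [0, t))\big),
\]
where $\pi_\Omega : \Omega \times \R^+ \longrightarrow \Omega$ is the canonical projection. Next I would use progressivity to pin down the set being projected inside a product $\sigma$-field: since $\ind_A$ is progressively measurable, the restriction $A \cap (\Omega \times [0, t])$ lies in $\mathcal{F}_t \otimes \mathcal{B}([0, t])$, and intersecting with the measurable rectangle $\Omega \times [0, t)$ keeps $A \cap (\Omega \times [0, t))$ in $\mathcal{F}_t \otimes \mathcal{B}([0, t])$.

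The decisive step is then to invoke the Measurable Projection Theorem. Because $\mathcal{F}_0$ — and hence each $\mathcal{F}_t$ — contains all $\mathbb{P}$-null sets, the space $(\Omega, \mathcal{F}_t, \mathbb{P})$ is complete, while $[0, t]$ is a Borel subset of the Polish space $\R$. Under these hypotheses the projection onto $\Omega$ of any member of $\mathcal{F}_t \otimes \mathcal{B}([0, t])$ is again $\mathcal{F}_t$-measurable, which yields $\{D_A < t\} \in \mathcal{F}_t$ and closes the argument.

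The hard part will be precisely the Measurable Projection Theorem: it is the one genuinely deep ingredient, ultimately resting on the theory of analytic sets and Choquet's capacitability theorem, and its completeness requirement is exactly what compels us to appeal to the usual conditions. Everything else — the right-continuity reduction and the identification of the event as a projection — is routine bookkeeping by comparison.
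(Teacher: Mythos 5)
Your proof is correct, and it is essentially ``the paper's proof'': the paper does not argue this theorem itself but defers to the literature (\cite{HWY1992}), and your argument --- reducing via right-continuity of the filtration to showing $\{D_A < t\} \in \mathcal{F}_t$, identifying this event as the projection $\pi_\Omega\big(A \cap (\Omega \times [0,t))\big)$ of a set in $\mathcal{F}_t \otimes \mathcal{B}([0,t])$ supplied by progressivity, and then invoking the Measurable Projection Theorem, whose completeness hypothesis is exactly what the usual conditions provide --- is precisely the standard proof given in that reference. The only point worth stating explicitly is that completeness of each $(\Omega, \mathcal{F}_t, \mathbb{P})$ requires ``$\mathbb{P}$-null sets'' to mean all subsets of measure-zero sets of the underlying complete space, which is the standard reading of the usual conditions and is what your projection step silently uses.
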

Next, we reveal how these results enable a transfer of the jump measure for c\`{a}dl\`{a}g and adapted processes to optional processes with infinitely many jumps and regulated trajectories which need not necessarily be right-continuous. To this end, we firstly generalise Theorem \ref{thm:Dellacherie} in the following sense:
\begin{theorem}\label{thm:jumps of optional processes and stopping times}
%Let $(\Omega , \mathcal{F}, {\bf{F}}, {\mathbb{P}})$ be a filtered
%probability space such that ${\bf{F}}$ satisfies the usual
%conditions. 
Let $X : \Omega \times \R^+ \longrightarrow \R$ be an
optional process such that all trajectories of $X$ are regulated 
and $\Delta X_0 = 0$. Then $\Delta X$ is also optional. 
If for each trajectory of $X$ its set of jumps is not finite, then there
exists a sequence of stopping times $(T_n)_{n \in \N}$ such that
$(T_n(\omega))_{n \in \N}$ is a strictly increasing sequence in $(0,
\infty)$ for all $\omega \in \Omega$ and
\[
J_{X_{\bullet}(\omega)} = \dju{n=1}{\infty}{3.3}{2.4}\{T_n(\omega)\} \textup{
for all } \omega \in \Omega,
\]
or equivalently,
\[
\{\Delta X \not= 0\} = \dju{n = 1}{\infty}{3.3}{2.4}\rsi T_n \lsi\,.
\]
In particular $\{\Delta X \not= 0\}$ is a thin set.
\end{theorem}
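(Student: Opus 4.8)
The plan is to treat the two assertions separately. For the optionality of $\Delta X$ I would argue directly from the structural lemmas. Since $X$ is optional with regulated trajectories, Lemma \ref{lemma:optional and regulated paths} gives that the left-limit process $X^{-}$ is predictable — hence optional — while the right-limit process $X^{+}$ is adapted and, by the same lemma, has right-continuous trajectories. By Theorem \ref{thm:POPA}, a right-continuous adapted process is optional, so $X^{+}$ is optional as well. Consequently $\Delta X = X^{+} - X^{-}$ is a difference of two optional processes and therefore optional; the hypothesis $\Delta X_0 = 0$ merely normalises the value at the origin and is not needed here.

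For the second assertion the natural route is to combine the three pathwise and measurability ingredients already at our disposal. Fix the decomposition of Proposition \ref{thm:finitely_layered_jump_rep}: for every $\omega$ the jump set $J_{X_\bullet(\omega)}$ is an at most countable disjoint union of the finite layers $A_{m,k}(\omega)$ indexed by the jump-size band ($|\Delta X| \in (\frac{1}{m+1}, \frac{1}{m}]$, together with $|\Delta X| > 1$) and the time window $(k-1,k]$. Because $\Delta X$ is optional, each such layer is the $\omega$-section of an optional — in particular progressive — subset of $\Omega \times \R^{+}$, so Theorem \ref{thm:optional debut is stopping time} applies to its d\'{e}but and to the d\'{e}buts of its remaining pieces. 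Within a single (finite) layer I would then invoke the recursive minimum construction of Lemma \ref{lemma:recursive construction of finite sets}, realising each successive minimum as the d\'{e}but of the part of the layer still to the right of the previous one; by Theorem \ref{thm:optional debut is stopping time} every such d\'{e}but is a stopping time, and the construction enumerates that layer in strictly increasing order. Piecing the layers together already yields the disjoint-graph representation $\{\Delta X \neq 0\} = \bigcup_{n} \rsi T_n \lsi$ in the spirit of Theorem \ref{thm:Dellacherie}, now \emph{without} any right-continuity of $X$.

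The hard part — and the step I expect to be the genuine obstacle — is to assemble these per-layer enumerations into a single sequence $(T_n)_{n \in \N}$ that, for every $\omega$ simultaneously, is strictly increasing and exhausts $J_{X_\bullet(\omega)}$. Two difficulties compound here. First, both the number of jumps in each layer and the way the layers interleave along the time axis depend on $\omega$, so a fixed re-indexing that is order-preserving for all $\omega$ at once is not immediate. Second, and more seriously, the iterative d\'{e}but scheme of Proposition \ref{thm:iterative exhausting representation} produced strictly increasing times precisely because it worked with jumps in a set $A$ bounded away from $0$, where no accumulation can occur; for the full jump set the small jumps may accumulate, and at an accumulation point the d\'{e}but of the remaining jumps need not be attained — it can land on a point that is not itself a jump — so the naive ``next jump after $T_{n-1}$'' recursion stalls. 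This is exactly where right-continuity was used in the c\`{a}dl\`{a}g setting and is now unavailable. Overcoming it amounts to showing that, for a regulated trajectory, the jump set carries the order type of $\N$, i.e.\ has no right-accumulation points; I would try to extract this from the band-by-band local finiteness furnished by Proposition \ref{thm:finitely_layered_jump_rep} (each $J_{f_n}(\frac{1}{m})$ being finite), and it is the precise formulation and validity of this well-ordering claim that I regard as the \emph{crux} of the whole argument.
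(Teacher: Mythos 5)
Where your proposal commits to an argument, it is precisely the paper's own argument: $\Delta X$ is optional because $X^{-}$ is predictable and $X^{+}$ is adapted with right-continuous paths (Lemma \ref{lemma:optional and regulated paths} combined with Theorem \ref{thm:POPA}); the jump set of each trajectory is decomposed into the finite layers of Proposition \ref{thm:finitely_layered_jump_rep}; inside each layer the recursive minima of Lemma \ref{lemma:recursive construction of finite sets} are realised as d\'{e}buts of optional sets and are therefore stopping times by Theorem \ref{thm:optional debut is stopping time}. With the convention $\inf \emptyset = +\infty$ these d\'{e}buts are stopping times even where a layer is empty or exhausted, their graphs in $\Omega \times \R^{+}$ are pairwise disjoint and cover $\{\Delta X \not= 0\}$, and composing with a fixed bijection $\N \rightarrow \N \times \N$ yields the disjoint-graph representation $\{\Delta X \not= 0\} = \bigcup_{n} \rsi T_n \lsi$, i.e.\ the thin-set assertion. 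Up to this point you and the paper coincide.

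The step you isolate as the crux is exactly the step the paper does not carry out: its proof ends with the words ``a simple relabeling of the stopping times $S_n^{(m)}$ finishes the proof'', and no such relabeling exists. Indeed, your doubt is justified in the strongest sense: the well-ordering property you hoped to extract (that the jump set of a regulated path has order type $\N$, equivalently no finite accumulation points) is false, and with it the monotone-enumeration clause of the theorem itself. Take $f := \sum_{n \geq 2} n^{-2}\,\ind_{[1-1/n,\,\infty)} + \ind_{[2,\,\infty)}$ and $X_t(\omega) := f(t)$. As a uniform limit of step functions, $f$ is right-continuous with left limits, so $X$ is even a deterministic c\`{a}dl\`{a}g adapted --- hence optional --- process with regulated paths, $\Delta X_0 = 0$, and every path has the infinite jump set $J_f = \{1 - 1/n : n \geq 2\} \cup \{2\}$. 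A strictly increasing sequence $(T_n(\omega))_{n \in \N}$ with $\bigcup_n \{T_n(\omega)\} = J_f$ would have to place the infinitely many points $1 - 1/n$ at indices preceding the index assigned to $2$, which is impossible. (This is also why Dellacherie's Theorem \ref{thm:Dellacherie} asserts no ordering, and why Proposition \ref{thm:iterative exhausting representation} obtains increasing times only for jumps in a set $A$ with $0 \notin \overline{A}$, where accumulation is excluded.) So the obstruction you identified cannot be overcome; what is provable --- by your route and by the paper's --- is the optionality of $\Delta X$ and the unordered representation $\{\Delta X \not= 0\} = \bigcup_{n} \rsi T_n \lsi$ as a countable disjoint union of graphs, which is all that Theorem \ref{thm:optional random measures} and the subsequent corollary actually need. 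The strict monotonicity of $(T_n(\omega))_{n \in \N}$, together with the finiteness of all $T_n(\omega)$, should be removed from the statement or recovered under additional hypotheses.
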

\begin{proof}
\smartqed
Due to the assumption on $X$ and Lemma \ref{lemma:optional and regulated paths}, 
$X^{-}$ is predictable, $X^{+}$ is adapted
% (since the filtration is right-continuous) 
and all trajectories of $X^{+}$ are
right-continuous on $\R^+$. Hence, by Theorem \ref{thm:POPA} both,
$X^{-}$ and $X^+$ are optional processes, implying that
the jump process $\Delta X = X^{+} - X^{-}$ is optional as well.
% Hence, the stochastic set $\{\Delta X \not= 0\}$ is optional.

Fix $\omega \in \Omega$. Consider the trajectory $f : =
X_{\bullet}(\omega)$. Due to Proposition 
\ref{thm:finitely_layered_jump_rep} we may represent $J_f$ as
\[
J_f = \dju{m = 1}{\infty}{3.4}{2.2} D_m(\omega),
\]
where $\kappa_m(\omega) : = {\#}(D_m(\omega)) < +\infty$
for all $m \in \N$.
% \[
% A_{1}(\omega) : = (\Delta f)^{-1}\big([1, \infty)\big) = \{s \in (0,
% \infty) : (\omega, s) \in \{\Delta X \in [1, \infty)\}\}
% \]
% and
% \[
% A_{m+1}(\omega) : = (\Delta f)^{-1}\big([\frac{1}{m+1},
% \frac{1}{m})\big) = \big\{s \in (0, \infty) : (\omega, s) \in
% \big\{\Delta X \in [\frac{1}{m+1}, \frac{1}{m})\big\}\big\}
% \]
% for all $m \in \N$.
Let ${\mathbb{M(\omega)}} : =\{m \in \N : D_{m}(\omega) \not=
\emptyset\}$. Fix an arbitrary $m \in {\mathbb{M(\omega)}}$. Consider
\[
0 < S_1^{(m)}(\omega) : = \min(D_{m}(\omega))
\]
and, if $\kappa_m(\omega) \geq 2$,
\[
0 < S_{n+1}^{(m)}(\omega) := \min\big(D_{m}(\omega) \cap
(S_n^{(m)}(\omega), \infty)\big),
\]
where $n \in \{1, 2, \ldots, \kappa_m(\omega) - 1\}$.
% , where $\kappa_m(\omega) : = \textup{card}(D_{m}(\omega)) < +\infty$.
Since $\Delta X$ is optional, it follows that $\{\Delta X \in B\}$
is optional for all Borel sets $B \in \mathcal{B}(\R)$. Moreover,
since $\Delta f(0) = \Delta X_0(\omega) : = 0$ (by assumption), it
actually follows that $\{s \in \R^+ : (\omega, s) \in \{\Delta X \in
C\}\} = \{s \in (0, \infty) : (\omega, s) \in \{\Delta X \in C\}\}$
for all Borel sets $C \in \mathcal{B}(\R)$ which do not contain $0$.
Hence, as the construction of the sets $D_{m}(\omega)$ in the proof
of Proposition \ref{thm:finitely_layered_jump_rep} clearly
shows, $S_1^{(m)}$ is the d\'{e}but of an optional set.
Consequently, due to Theorem \ref{thm:optional debut is stopping
time}, it follows that $S_1^{(m)}$ is a stopping time. If
$S_n^{(m)}$ is a stopping time, the stochastic interval $\lsi
S_n^{(m)}, \infty \rsi$ is optional too (cf. \cite{HWY1992},
Theorem 3.16). Thus, by construction, $S_{n+1}^{(m)}$ is the
d\'{e}but of an optional set and hence a stopping time. Due to Lemma
\ref{lemma:recursive construction of finite sets}, we have
\[
J_f = \dju{m \in {\mathbb{M(\omega)}}}{}{6.0}{2.4} D_{m}(\omega) =
\dju{m \in {\mathbb{M(\omega)}}}{}{6.0}{4.0} \dju{n =
1}{\kappa_m(\omega)}{4.5}{3.0}\{S_n^{(m)}(\omega)\}.
\]
Hence, since for each trajectory of $X$ its set of jumps is not
finite, the at most countable set ${\mathbb{M(\omega)}}$ is not
finite, hence countable, and a simple relabeling of the stopping
times $S_n^{(m)}$ finishes the proof.
\qed
\end{proof}
\begin{theorem}\label{thm:optional random measures}
Let $X : \Omega \times \R^+ \longrightarrow \R$ be an
optional process such that all trajectories of $X$ are regulated, 
$\Delta X_0 = 0$ and the set of jumps of each trajectory of $X$ is
not finite. Then the function
\begin{eqnarray*}
j_X : \Omega \times \mathcal{B}(\R^+) \otimes
\mathcal{B}(\R) & \longrightarrow & \Z^+ \cup \{+ \infty\}\\
(\omega, G) & \mapsto & \sum_{s > 0} \ind_{G}\big(s, \Delta
X_{s}(\omega) \big) \ind_{\{\Delta X \not= 0\}}(\omega, s)
\end{eqnarray*}
is an integer-valued random measure.
\end{theorem}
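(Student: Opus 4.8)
The plan is to reduce everything to the exhausting representation furnished by Theorem~\ref{thm:jumps of optional processes and stopping times} and then to verify the defining properties of an integer-valued random measure one at a time, splitting them into a pathwise part (for fixed $\omega$) and a measurability part (in $\omega$). First I would invoke Theorem~\ref{thm:jumps of optional processes and stopping times}: since $X$ is optional with regulated trajectories, $\Delta X_0 = 0$, and every trajectory has infinitely many jumps, its hypotheses are met, so $\Delta X$ is optional and there is a sequence $(T_n)_{n\in\N}$ of stopping times with $(T_n(\omega))_{n\in\N}$ strictly increasing in $(0,\infty)$ and $\{\Delta X \neq 0\} = \dotcup_{n\in\N}\rsi T_n\lsi$. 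Because $\ind_{\{\Delta X \neq 0\}}(\omega, s) = 1$ exactly when $s = T_n(\omega)$ for some (necessarily unique) $n$, all terms are non-negative and a Tonelli reindexing collapses the defining sum to
\[
j_X(\omega, G) = \sum_{n=1}^\infty \ind_G\big(T_n(\omega), \Delta X_{T_n(\omega)}(\omega)\big) = \sum_{n=1}^\infty \varepsilon_{(T_n(\omega),\, \Delta X_{T_n(\omega)}(\omega))}(G).
\]

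Next, for fixed $\omega$ I would read off the pathwise measure properties directly from this representation. The right-hand side is a countable sum of Dirac measures, hence a $\sigma$-additive non-negative measure on $\mathcal{B}(\R^+)\otimes\mathcal{B}(\R)$ with values in $\Z^+\cup\{+\infty\}$; $\sigma$-additivity is just the monotone-convergence interchange of the two sums. The normalisation $j_X(\omega;\{0\}\times\R)=0$ demanded of a random measure is immediate, since every $T_n(\omega)>0$ places no atom on $\{0\}\times\R$; moreover the strict monotonicity of $(T_n(\omega))_n$ gives
\[
j_X(\omega;\{t\}\times\R) = \#\{n\in\N : T_n(\omega)=t\} \leq 1 \quad\text{for every } t,
\]
which is the characteristic single-atom-per-fibre property of an integer-valued random measure.

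Finally I would establish the genuinely ``random'' part, namely measurability of $\omega\mapsto j_X(\omega,G)$ for each fixed $G\in\mathcal{B}(\R^+\times\R)$. The crucial input here is that $\Delta X$ is optional, as asserted in Theorem~\ref{thm:jumps of optional processes and stopping times}: since each $T_n$ is a finite stopping time, the sampled value $\Delta X_{T_n}$ is $\mathcal{F}_{T_n}$-measurable, so $\omega\mapsto\big(T_n(\omega),\Delta X_{T_n(\omega)}(\omega)\big)$ is $\mathcal{F}/\mathcal{B}(\R^+\times\R)$-measurable; composing with the Borel indicator $\ind_G$ and summing over $n$ yields a measurable map. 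A monotone-class argument then upgrades this measurability-for-fixed-$G$ to the joint measurability needed for $j_X$ to be a random measure in the sense of the Definition above.

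The step I expect to be the main obstacle is precisely this last one, together with the legitimacy of the reduction to the sum over the $T_n$: both genuinely rely on the optionality of $\Delta X$ and on the $T_n$ being bona fide stopping times, both of which are supplied only by Theorem~\ref{thm:jumps of optional processes and stopping times}; without them the individual summands $\ind_G(T_n,\Delta X_{T_n})$ would not even be measurable. The pathwise measure axioms, by contrast, are essentially bookkeeping once the atomic representation is in hand.
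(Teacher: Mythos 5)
Your proof starts from the same key input as the paper---Theorem~\ref{thm:jumps of optional processes and stopping times} and the resulting atomic representation $j_X(\omega,\cdot)=\sum_{n}\varepsilon_{(T_n(\omega),\,\Delta X_{T_n(\omega)}(\omega))}$---and your pathwise verifications (values in $\Z^+\cup\{+\infty\}$, $\sigma$-additivity, no mass on $\{0\}\times\R$, at most one atom on each fibre $\{t\}\times\R$) are correct. The genuine gap lies in what you verify at the end. In the sense the paper uses the term (cf.\ \cite[Def.~II.1.13]{JS2003} and Ch.~XI of \cite{HWY1992}, which is exactly what the paper's proof appeals to), an \emph{integer-valued random measure} must, beyond the properties above, be an \emph{optional} random measure---that is, for every non-negative optional function $W$ on $\Omega\times\R^+\times\R$ the integral process $(W\ast j_X)_t(\omega)=\int_{[0,t]\times\R}W(\omega,s,x)\,j_X(\omega;d(s,x))$ must be an optional process---and it must satisfy a $\sigma$-finiteness condition relative to $\mathcal{P}\otimes\mathcal{B}(\R)$. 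Plain $\mathcal{F}$-measurability of $\omega\mapsto j_X(\omega,G)$ for each fixed Borel set $G$, which is all your last step delivers, says nothing about compatibility of the measure with the filtration and is strictly weaker; it would hold just as well if the $T_n$ were arbitrary measurable random times. So you prove that $j_X$ is a measurable family of $\Z^+\cup\{+\infty\}$-valued measures with the single-atom property, but not that it is an integer-valued random measure in the intended sense. This is precisely the ``main obstacle'' you flagged, but flagging it does not resolve it.

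The paper disposes of this obstacle by citation: \cite[Theorem~11.13]{HWY1992} characterises integer-valued random measures as exactly those of the form $\sum_{s}\ind_{D}(\cdot,s)\,\varepsilon_{(s,\beta_s)}$ with $D$ a thin optional set and $\beta$ an optional process, and Theorem~\ref{thm:jumps of optional processes and stopping times} supplies exactly these hypotheses, namely $D=\{\Delta X\neq 0\}=\bigcup_n\rsi T_n\lsi$ (pairwise disjoint graphs of stopping times) and $\beta=\Delta X$ optional. If you wish to keep your hands-on route instead of citing that theorem, the missing optionality can be patched with ingredients you already have: for each $n$ the random variable $W\big(T_n,\Delta X_{T_n}\big)$ is $\mathcal{F}_{T_n}$-measurable (optionality of $\Delta X$ plus $T_n$ being a stopping time), and a process of the form $Z\,\ind_{\rsi T,\infty\rsi}$ with $Z$ an $\mathcal{F}_T$-measurable random variable is optional; hence $(W\ast j_X)_t=\sum_n W\big(T_n,\Delta X_{T_n}\big)\ind_{\{T_n\leq t\}}$ is a countable sum of optional processes, and the required $\sigma$-finiteness can be organised along the disjoint graphs $\rsi T_n\lsi$, each of which carries total mass at most one. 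Without either this argument or the citation, the proof is incomplete.
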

\begin{proof}
\smartqed
We only have to combine Theorem \ref{thm:jumps of optional
processes and stopping times} and \cite{HWY1992}, Theorem
11.13.
\qed
\end{proof}
Implementing the exhausting series of stopping times $(T_n)_{n \in \N}$ of the thin set $\{\Delta X \not= 0\}$ from Theorem \ref{thm:jumps of optional processes and stopping times}, we immediately obtain 
\begin{corollary}
Let $B \in {\mathcal{B}}{}({\mathbb{R}}^{+} \times {\mathbb{R}}{})$ and $\omega \in \Omega$. Then
\begin{eqnarray*}
j_X(\omega, B) & = & \int_{{\mathbb{R}}^{+} \times {\mathbb{R}}} \ind_{B}(s, x) j_X(\omega, d(s,x))\\
& = & \sum_{n=1}^\infty \ind_B\big(T_n(\omega), \Delta X_{T_n(\omega)}(\omega)\big)\\
& = & \#\big\{n \in {\mathbb{N}} : \big(T_n(\omega), \Delta X_{T_n(\omega)}(\omega)\big) \in B \big\}\,.
\end{eqnarray*}
\end{corollary}
\begin{proof}
\smartqed
Since $\ind_{\{\Delta X \not= 0\}}(\omega, s) = \sum_{n=1}^\infty \ind_{\,\csbl T_n \csbr}(\omega, s) = \sum_{n=1}^\infty \ind_{\{T_n(\omega)\}}(s)$, we just have to permute the two sums.
\qed 
\end{proof}
We finish our paper with the following two natural questions:
\begin{prb}
Let $X : \Omega \times \R^+ \longrightarrow \R$ be an
optional process such that all trajectories of $X$ are regulated 
and $\Delta X_0 = 0$. Does Lemma \ref{thm:Applebaum} hold for $X$?
\end{prb}
\begin{prb}
Let $X : \Omega \times \R^+ \longrightarrow \R$ be an
optional process such that all trajectories of $X$ are regulated 
and $\Delta X_0 = 0$. Does Proposition \ref{thm:integral} hold for $X$?
\end{prb}
\begin{acknowledgement}
The author would like to thank Monique Jeanblanc for the indication of the very valuable references \cite{ACDJ_1_2013}, \cite{ACDJ_2_2013} and \cite{ACDJ_3_2013}.   
\end{acknowledgement}

\end{document}